\documentclass[a4paper,10pt]{amsart}
\usepackage[colorlinks,linkcolor=blue,citecolor=blue]{hyperref}
\usepackage{latexsym, amssymb, amsmath, amsthm, mathrsfs, bbm}
\usepackage[all, knot]{xy}
\xyoption{arc}

\usepackage{anysize}\marginsize{25mm}{25mm}{30mm}{30mm}

\addtolength{\parskip}{6pt}

\def \To{\longrightarrow}
\def \dim{\operatorname{dim}}

\def \C{\mathcal{C}}

\def \d{\delta}

\def \Z{\mathbb{Z}}

\def \k{\mathbbm{k}}
\def \1{\mathbf{1}}
\def \Id{\operatorname{Id}}
\def \rep{\operatorname{rep}}

\numberwithin{equation}{section}

\newtheorem{theorem}{Theorem}[section]
\newtheorem{lemma}[theorem]{Lemma}
\newtheorem{proposition}[theorem]{Proposition}

\newtheorem{remark}[theorem]{Remark}

\begin{document}

\title[Quasi-Quantum Planes and Quasi-Quantum Groups]{Quasi-Quantum Planes and Quasi-Quantum Groups \\ of Dimension $p^3$ and $p^4$$^\dag$}\thanks{$^\dag$Supported by PCSIRT IRT1264, SRFDP 20130131110001 and SDNSF ZR2013AM022.}

\subjclass[2010]{16T05, 16T20, 16G20}

\keywords{quasi-quantum plane, quasi-quantum group, Hopf quiver}

\author[H.-L. Huang]{Hua-Lin Huang}
\address{School of Mathematics, Shandong University, Jinan 250100, China} \email{hualin@sdu.edu.cn}

\author[Y. Yang]{Yuping Yang*}\thanks{*Corresponding author.}
\address{School of Mathematics, Shandong University, Jinan 250100, China} \email{yupingyang.sdu@gmail.com}

\date{}
\maketitle

\begin{abstract}
The aim of this paper is to contribute more examples and classification results of finite pointed quasi-quantum groups within the quiver framework initiated in \cite{qha1, qha2}. The focus is put on finite dimensional graded Majid algebras generated by group-like elements and two skew-primitive elements which are mutually skew-commutative. Such quasi-quantum groups are associated to quasi-quantum planes in the sense of nonassociative geomertry \cite{m1, m2}. As an application, we obtain an explicit classification of graded pointed Majid algebras with abelian coradical of dimension $p^3$ and $p^4$ for any prime number $p.$
\end{abstract}

\section{Introduction}
The classification problem of finite pointed tensor categories and the underlying quasi-quantum groups in accordance to the Tannaka-Krein duality has been an active research theme for quite some time. In \cite{qha1, qha2}, the quiver framework was proposed by the first author to tackle this problem and some interesting results have been obtained in this direction, see for example \cite{qha3, qha4, lvoz}.

The aim of this paper is to contribute more examples and classification results of finite pointed quasi-quantum groups within the quiver framework. We focus on the class of finite dimensional graded pointed Majid algebras which are generated by group-like elements and two skew-primitive elements which are mutually skew-commutative. The reason is twofold. On the one hand, such Majid algebras are relatively easy and a complete classification may be attainable. On the other hand, this class of Majid algebras are interesting in nonassociative geometry, namely, they may be viewed as the ``coordinate algebra" of nonassociative planes, see \cite{m1, m2}.

The notion of Majid algebras adopted here stands for coquasi-Hopf algebras, or dual quasi-Hopf algebras used by some authors. We remark that the study of Majid algebras and their comodule categories is the main task of the classification problem of tensor categories and quasi-quantum groups, see for instance \cite{qha1} for an explanation.

Throughout we work over the field $\k$ which is algebraically closed with characteristic $0.$ We aim to classify finite dimensional graded pointed Majid algebras $M$ over $\k$ satisfying
\begin{itemize}
\item[(R1):] $M$ is generated by an abelian group $G$ and two skew-primitive elements $\{X,Y\};$ and \\
\item[(R2):] the skew-primitive elements are skew-commutative, i.e., $XY=qYX$ for some $q\in \k^*.$
\end{itemize}
Here, by graded we mean $M$ is coradically graded, that is, $M$ admits a decomposition $M=\bigoplus_{n \ge 0} M(n)$ and $M_n:=\bigoplus_{0 \le i \le n} M(i)$ is the $n$-th term of its coradical filtration. By the assumption, the coradical of $M$ is $M_0=(\k G, \Phi),$ which is a Majid subalgebra of $M$ and the associator $\Phi$ is in fact a normalized 3-cocycle on $G.$ By $_{M_0}^{M_0}\mathcal{Y}\mathcal{D}^{\Phi}$ we denote the Yetter-Drinfeld category of $M_0,$ see \cite{qha2}. Let $\pi: M \rightarrow M_0$ be the natural projection and we consider the associated coinvariant subalgebra $$R:=M^{\operatorname{coinv} M_0}=\{x\in M| (\Id\otimes \pi)\Delta(x)=x\otimes 1\},$$ where $\Delta$ is the coproduct of $M.$ Then $R$ is actually a braided Hopf algebra in $_{M_0}^{M_0}\mathcal{Y}\mathcal{D}^{\Phi}$ generated by primitive elements $\{X,Y\}$ with relations
\begin{gather}
X^{\vec{N}_1}=0, \quad Y^{\vec{N}_2}=0 \ \mathrm{for \ some \ positive \ integers} \ N_1>1, \ N_2>1,\\
XY=qYX \ \mathrm{for \ some} \ q \in \k^*,\ \mathrm{and} \\
\bigtriangleup(X)=X\otimes 1+1\otimes X,\ \ \bigtriangleup(Y)=Y\otimes 1+1\otimes Y
\end{gather} where $X^{\vec{N}}$ means $\underbrace{(\cdots((}_{N-1}XX)X)\cdots X).$ It is worthy to note that in general the multiplication of $R$ is not associative in the usual sense since the associativity constraint of $_{M_0}^{M_0}\mathcal{Y}\mathcal{D}^{\Phi}$ is nontrivial, but quasi-associative in the sense that it is associative up to a 3-cocycle. The previous relations indicate that the algebra $R$ may be viewed as the ``coordinate algebra" of some ``plane" in the braided category $_{M_0}^{M_0}\mathcal{Y}\mathcal{D}^{\Phi}$ according to the philosophy of noncommutative algebraic geometry \cite{manin} and nonassociative geometry \cite{m1, m2}. Therefore, we adopt the notion quasi-quantum plane for $R$ in accordance with the terminology of \cite{m1, m2, manin}. It is well known that one may recover $M$ by $R$ via a quasi-version (see e.g. \cite{ap}) of the procedure of Majid's bosonization \cite{m}, hence the study of $M$ and $R$ is essentially identical.

In this paper we mainly work on $M,$ as in this situation the handy Hopf quiver \cite{cr} technique can be applied. Recall that, a Hopf quiver is defined through the so-called ramification data of groups. For a group $G,$ let $\C$ be its conjugacy classes, a ramification datum of $G$ is a formal sum $R=\sum_{C\in \C}R_CC$ with nonnegative integer coefficients. The Hopf quiver associated to the ramification datum $(G,R),$ denoted by $Q(G,R),$ is the quiver with vertices the elements of $G$ and has $R_C$ arrows from $x$ to $cx$ for each $x \in G$ and $c \in C.$ By the Gabriel-type theorem for pointed Majid algebras \cite{qha1}, $M$ can be realized as a subalgebra of a quiver Majid algebra on some unique Hopf quiver. Within this quiver framework, we observe that finite dimensional graded pointed Majid algebras satisfying (R1-R2) live on Hopf quivers of the form $Q(\langle g,h \rangle,g+h)$ where $\langle g,h \rangle$ is a finite abelian group generated by two elements $g$ and $h,$ and the two skew-primitive elements are nothing but the two arrows $X:1 \rightarrow g, \ \ Y:1 \rightarrow h$ where $1$ is the unit of the group $\langle g,h \rangle.$ Then the conditions of $M$ may be interpreted by combinatorics of Hopf quivers and further may be determined by some projective representations of $G$ on the $\k$-space spanned by $\{X, Y\}$ according to \cite{qha2}. This allows us to give a complete classification of such Majid algebras and to classify some quasi-quantum groups of low dimension. Our classification list of Majid algebras recovers some examples, in a dual form, obtained in the interesting paper \cite{a}  which appear as basic quasi-Hopf algebras over cyclic groups, and the list also contains new examples which are over non-cyclic groups.

This short paper is organized as follows. In Section 2, we compute all the graded Majid algebra structures on $\k Q(\langle g,h \rangle,g+h)$ and determine all the Majid subalgebras associated to quasi-quantum planes. On this base, in Section 3 we give the classification of graded pointed Majid algebras with abelian coradical of dimension $p^3$ and $p^4$ for any prime number $p.$

\section{Quasi-quantum planes and the associated pointed Majid algebras}
The main task of this section is to classify graded pointed Majid algebras associated to quasi-quantum planes. The quiver framework is briefly recalled at first and the reader is referred to \cite{qha1, qha2} for more unexplained details.

\subsection{Majid bimodules}
Let $G$ be a finite group and $\Phi$ a normalized 3-cocycle on $G.$ Let $\k G$ be the group algebra with the usual diagonal coproduct. Extend $\Phi,$ without changing the notation, by linearity to a function on $(kG)^{\otimes 3},$ then $(\k G, \Phi)$ becomes a Majid algebra with associator $\Phi$ and antipode $(S,\alpha, \beta)$ given by $S(g)=g^{-1}, \ \alpha(g)=1$ and $\beta(g)=\frac{1}{\Phi(g,g^{-1},g)}$ for any $g\in G.$ By definition, a $(\k G, \Phi)$-Majid bimodule $M$ is a $\k G$-bicomodule, or equivalently a $G$-bigraded space $M=\bigoplus_{g,h \in G} \ ^gM^h$ with $(g,h)$-isotypic component
$$^gM^h=\{ m \in M \ | \ \d_{_L}(m)=g \otimes m, \ \d_{_R}(m)=m \otimes h \} $$ endowed with a compatible quasi-bimodule structure satisfying the
following equalities:
\begin{gather}
e.(f.m)=\frac{\Phi(e,f,g)}{\Phi(e,f,h)}(ef).m,\\
(m.e).f=\frac{\Phi(h,e,f)}{\Phi(g,e,f)}m.(ef),\\
(e.m).f=\frac{\Phi(e,h,f)}{\Phi(e,g,f)}e.(m.f),
\end{gather}
for all $e,f,g,h \in G$ and $m \in \ ^gM^h.$ Recall that by compatible is meant the quasi-bimodule structure maps are bicomodule morphisms. 

\subsection{Projective representations}
Denote the category of $(\k G,\Phi)$-Majid bimodules by $\mathcal{M}\mathcal{B}(\k G,\Phi).$ It was showed in \cite{qha2} that the category $\mathcal{M}\mathcal{B}(\k G,\Phi)$ is equivalent to the product of projective representation categories of some subgroups of $G.$ We recall this equivalence for the case of $G$ being a finite abelian group which is enough for our purpose. For each $V \in \mathcal{M}\mathcal{B}(\k G,\Phi)$, let $V=\oplus_{g,h\in G} \ ^gV^h$ be the decomposition of isotypic components. According to the axioms of $(\k G,\Phi)$-Majid bimodules, we have for all $f,g,h\in G,$
\begin{equation*}
f .  ^gV^h=\ ^{fg}V^{fh},\ \ ^gV^h . f =\ ^{gf}V^{hf}.
\end{equation*}
Since $G$ is abelian, we have
\begin{equation}
(f . ^gV^h) . f^{-1}=\  ^gV^h.
\end{equation}
For each $v\in \ ^gV^1,$ define $g\triangleright v=(g . v). g^{-1}.$ It is easy to verify that
\begin{equation}
1\triangleright v=v, \ \ e\triangleright (f\triangleright v)=\widetilde{\Phi}(e,f,g)(ef)\triangleright V,
\end{equation}
where 
\begin{equation}\widetilde{\Phi}(e,f,g)=\frac{\Phi(e,f,g)\Phi(ef,f^{-1},e^{-1})\Phi(e,fg,f^{-1})}{\Phi(efg,f^{-1},e^{-1})\Phi(e,f,f^{-1})}.
\end{equation}
  Denote by
\begin{equation}
\widetilde{\Phi}_g: G\times G\To \k^*, \ \ (e,f)\to \widetilde{\Phi}(e,f,g)
\end{equation}
the mapping induced by $\widetilde{\Phi}.$ Then $\widetilde{\Phi}_g$ is a 2-cocycle on $G$ and hence $^gV^1$ is a projective representation of $G.$ By $\rep( G, \widetilde{\Phi}_g)$ we denote the category of projective representations of $G$ associated to the 2-cocycle $\widetilde{\Phi}_g.$ The we have the following category equivalence $$\mathcal{M}\mathcal{B}(\k G,\Phi)\cong \prod_{g\in G} \rep( G, \widetilde{\Phi}_g).$$

\subsection{Quantum shuffle product}
Let $G$ be a group, $R$ a ramification datum, and $Q(G,R)$ the associated Hopf quiver. By $Q_l$ we denote the set of paths of $Q(G,R)$ of length $l.$ It is obvious that $Q_0=G$ and $Q_1$ is the set of arrows. It was proved in \cite{qha1} that the set of graded Majid algebra structures on the path coalgebra $\k Q(G,R)$ is in one-to-one correspondence with the set of $(\k G,\Phi)$-Majid bimodule structures on $\k Q_1$.

For completeness we recall the quantum shuffle product procedure \cite{qha1} which turns a $(\k G,\Phi)$-Majid bimodule structure on $\k Q_1$ into a graded Majid algebra structure on $\k Q(G,R).$ Suppose that $Q(G,R)$ and a necessary $(\k G, \Phi)$-Majid bimodule structure on $\k Q_1$ are given. Let $p \in Q_l$ be a path. An $n$-thin split of it is a sequence $(p_1, \ \cdots, \ p_n)$ of vertices and arrows such that the concatenation $p_n \cdots p_1$ is exactly $p.$ These $n$-thin splits are in one-to-one correspondence with the $n$-sequences of $(n-l)$ 0's and $l$ 1's. Denote the set of such sequences by $D_l^n.$ Clearly $|D_l^n|={n \choose l}.$ For $d=(d_1, \ \cdots, \ d_n) \in D_l^n,$ the corresponding $n$-thin split is written as $dp=((dp)_1, \ \cdots, \ (dp)_n),$ in which $(dp)_i$ is a vertex if $d_i=0$ and an arrow if $d_i=1.$ Let $\alpha=a_m \cdots a_1$ and $\beta=b_n \cdots b_1$ be paths of length $m$ and $n$ respectively. For $d \in D_m^{m+n}$, let $\bar{d} \in D_n^{m+n}$ be the complement sequence of $d$ which is obtained from $d$ by replacing each 0 by 1 and each 1 by 0. Define an element
\begin{equation}
(\alpha \ast \beta)_d=[(d\alpha)_{m+n}.(\bar{d}\beta)_{m+n}] \cdots
[(d\alpha)_1.(\bar{d}\beta)_1]
 \end{equation}
 in $\k Q_{m+n},$ where $[(d\alpha)_i.(\bar{d}\beta)_i]$ is understood as the action of $(\k G, \Phi)$-Majid bimodule on $\k Q_1$ and these terms in different brackets are put together by cotensor product, or equivalently concatenation. The reader is referred to \cite{qha1} for detail. In terms of these notations, the formula of the quantum shuffle
product of $\alpha$ and $\beta$ is given as follows:
\begin{equation}
\alpha \ast \beta=\sum_{d \in D_m^{m+n}}(\alpha \ast \beta)_d \ .
\end{equation}

\subsection{Projective $\k G$-representations on $\k \{X,Y\}$} In the rest of this section, let $G$ be a finite abelian group generated by $g$ and $h$ and let $Q$ denote the Hopf quiver $Q(G,g+h).$ The two arrows $1\to g$ and $1\to h$ of $Q$ with source $1$ (the unit of $G$) are denoted by $X$ and $Y$ respectively. According to the possible structure of the group $G,$ we need to consider two cases: 1, $G=\Z_m\times \Z_n$ for some $m,n>1;$ 2, $G=\Z_m=\langle e\rangle$ and $g=e^\alpha, \ h=e^\beta$ for some $0<\alpha<m, \ 0<\beta<n$ such that $(\alpha,\beta)=1.$ In the first  case, it is clear that $mn\leq |g||h|.$ Here $|g|$ stands for the order of $g.$ If $mn=|g||h|,$ then $G=\langle g\rangle \times \langle h\rangle$; if $mn<|g||h|,$ then $g^s=h^t$ for some $0<s<|g|, \ 0<t<|h|.$ In the latter, the group $G$ may be seen as the quotient group $\langle g\rangle \times \langle h\rangle/\langle g^s h^{-t}\rangle$ and the associated pointed Majid algebras over $G$ can be obtained as those over $\langle g\rangle \times \langle h\rangle$ modulo the Majid ideal generated by $(g^s - h^t).$ Hence, in the following we consider mainly the case of $G=\langle g\rangle \times \langle h\rangle.$

With the above preparation, now we are ready to work on the Majid algebras associated to quasi-quantum planes. As suggested by the quiver classification project \cite{qha1, qha2}, the first step is to compute the projective representations of $\k G$ on $\k \{X,Y\},$ the $\k$-space spanned by $X$ and $Y;$ the second step is to recover $(\k G, \Phi)$-Majid bimodules on $\k Q_1$ from the obtained projective representations; the third step is to compute the Majid algebra structures satisfying conditions (R1-R2) using quantum shuffle product.

We begin with some notations and facts about normalized 3-cocycles on $G$ which will act as the associators of our Majid algebras later on. Let $\zeta_n$ be a primitive $n$-th root of unity. By $[x]$ we denote the integer part of a rational number $x$ and $(m,n)$ the greatest common divisor of two integers $m$ and $n.$ It was showed in \cite{bgrc1} that a complete list of representatives of normalized 3-cocycles on $G =\Z_m\times \Z_n =\langle g\rangle \times \langle h\rangle$ is given by
\begin{equation}
\Phi_{a,b,c}(g^ih^j,g^sh^t,g^kh^l)=\zeta_m^{a[\frac{k+s}{m}]i}\zeta_n^{b[\frac{k+s}{m}]j}\zeta_n^{c[\frac{t+l}{n}]j}, \quad \forall 0\leq i,s,k< m, \ 0\leq j,t,l< n
\end{equation}
with $0\leq a < m, \ 0\leq b < (m,n), \ 0\leq c < n.$ The normalized 3-cocycles on a cyclic group can be seen as (2.10) with $j,t,l=0.$ By direct computation the explicit form of (2.6) may be given by
\begin{equation}
\widetilde{\Phi}_{a,b,c}(g^ih^j,g^sh^t,g^kh^l)=\zeta_m^{-a[\frac{(m-i)'+(m-s)'}{m}]k}\zeta_n^{-b[\frac{(m-i)'+(m-s)'}{m}]l}\zeta_n^{-c[\frac{(n-j)''+(n-t)''}{n}]l}
\end{equation}
for $G=\Z_m\times \Z_n$ and by
\begin{equation}
\widetilde{\Phi}_{a}(g^i,g^j,g^k)=\zeta_m^{-a[\frac{(m-i)'+(m-j)'}{m}]k}
\end{equation}
for $G=\Z_m.$ Here $i'$ (resp. $i''$) is the remainder of the division of $i$ by $m$ (resp. $n$).

For our purpose, we need to determine projective $\k G$-representations on $\k \{X, Y\}$ with respect to 2-cocycles induced by $\widetilde{\Phi}.$
In the case with $g \ne h,$ we need to compute $(\k G, \widetilde{\Phi}_g)$-representations on $\k X$ and $(\k G, \widetilde{\Phi}_h)$-representations on $\k Y.$ In the case with $g=h,$ note that $G=\langle g\rangle$ and $\widetilde{\Phi}_g$ is symmetric, i.e. $\widetilde{\Phi}_g(x,y)=\widetilde{\Phi}_g(y,x)$ for all $x,y\in G,$ so the $\widetilde{\Phi}_g$-twisted group algebra of $G$ is commutative and then $\k \{X,Y\}$ is a direct sum of two one-dimensional projective representations of $G.$ By \cite{qha1}, we may also assume without loss of generality that $\k X$ and $\k Y$ are one-dimensional projective representations.

\begin{lemma}
\begin{itemize}
\item[1.] Write $\Phi=\Phi_{a,b,c}$ and $G=\Z_m\times \Z_n=\langle g\rangle \times \langle h\rangle$ for brevity. The following equations $$g\triangleright X=\lambda_1 X, \quad h\triangleright X=\lambda_2 X \quad (\mathrm{resp}. \ g\triangleright Y=\eta_1 Y, \quad h\triangleright Y=\eta_2 Y)$$ define a $(G,\widetilde{\Phi}_{g})$-projective (resp. $(G,\widetilde{\Phi}_{h})$-projective) representation on $\k X$ (resp. $\k Y$) if and only if $\lambda_1^m=\zeta_m^a, \ \lambda_2^n=1$ (resp. $\eta_1^m=\zeta_n^b,\ \eta_2^n=\zeta_n^c$).
\item[2.] Again write $\Phi=\Phi_a$ and $G=\Z_m=\langle e \rangle$ for brevity. The following equation $$e\triangleright X=\mu_1 X \quad (\mathrm{resp}. \ e \triangleright Y=\mu_2 Y)$$ defines a $(G,\widetilde{\Phi}_{g})$-projective (resp. $(G,\widetilde{\Phi}_{h})$-projective) representation on $\k X$ (resp. $\k Y$) if and only if $\mu_1^m=\zeta_m^{a\alpha}$ (resp. $\mu_2^m=\zeta_m^{a\beta}$).
\end{itemize}
\end{lemma}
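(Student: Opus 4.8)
The plan is to restate the question in terms of twisted group algebras. Write $\sigma$ for whichever of the $2$-cocycles $\widetilde{\Phi}_g$, $\widetilde{\Phi}_h$ is under consideration, so $G=\Z_m\times\Z_n=\langle g\rangle\times\langle h\rangle$ in part 1 and $G=\Z_m=\langle e\rangle$ in part 2. A one-dimensional $(G,\sigma)$-projective representation is the same thing as a one-dimensional module over the $\sigma$-twisted group algebra $\k_{\sigma}G=\bigoplus_{x\in G}\k u_x$, with $u_xu_y=\sigma(x,y)u_{xy}$ and $u_1=1$; equivalently a set map $\chi\colon G\to\k^*$ with $\chi(1)=1$ and $\chi(x)\chi(y)=\sigma(x,y)\chi(xy)$. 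Inspection of the explicit formulas (2.11)--(2.12) shows that $\sigma$ is symmetric, i.e.\ invariant under interchanging its first two arguments, so $\k_{\sigma}G$ is commutative; moreover each $u_{g^ih^j}$ is a nonzero scalar multiple of $u_g^iu_h^j$, so $\k_{\sigma}G$ is generated as an algebra by $u_g$ and $u_h$ (by $u_e$ alone in part 2). Thus the real content is to pin down the relations satisfied by these generators: the scalars by which $u_g,u_h$ (resp.\ $u_e$) act on a one-dimensional module are exactly $\lambda_1,\lambda_2,\eta_1,\eta_2$ (resp.\ $\mu_1,\mu_2$), and the asserted equations should come out as those relations.

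The technical core is the evaluation of the powers $u_g^m$ and $u_h^n$. Telescoping the twisted multiplication gives $u_g^m=\bigl(\prod_{j=1}^{m-1}\sigma(g,g^j)\bigr)u_1$ and $u_h^n=\bigl(\prod_{j=1}^{n-1}\sigma(h,h^j)\bigr)u_1$, and I would compute these products directly from (2.11)--(2.12). The decisive elementary observation is that $[\frac{(m-1)'+(m-j)'}{m}]=1$ for every $1\le j\le m-1$ (and likewise $[\frac{(n-1)''+(n-j)''}{n}]=1$ for $1\le j\le n-1$), so that each product collapses to a single root of unity: for $\sigma=\widetilde{\Phi}_g$ one gets $u_g^m=\zeta_m^{-a(m-1)}u_1=\zeta_m^{a}u_1$ and $u_h^n=u_1$; for $\sigma=\widetilde{\Phi}_h$ the same bookkeeping gives $u_g^m=\zeta_n^{b}u_1$ and $u_h^n=\zeta_n^{c}u_1$; and for $G=\Z_m=\langle e\rangle$ with $g=e^\alpha$ (resp.\ $h=e^\beta$) formula (2.12) gives $u_e^m=\zeta_m^{a\alpha}u_1$ (resp.\ $\zeta_m^{a\beta}u_1$). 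The one point requiring genuine care is the handling of the integer parts $[\,\cdot\,]$ and of the reductions $(m-i)'$, $(n-j)''$, so that the exponents collapse correctly modulo $m$ and $n$; everything else is formal calculation in a commutative ring.

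It then remains only to put the pieces together, and the converse is then automatic. Using commutativity ($u_gu_h=\sigma(g,h)u_{gh}=u_hu_g$) together with the relations just computed, the assignment $s\mapsto u_g,\ t\mapsto u_h$ extends to a surjective algebra map $\k[s,t]/(s^m-\zeta_m^{a},\,t^n-1)\twoheadrightarrow\k_{\widetilde{\Phi}_g}G$, and likewise $\k[s,t]/(s^m-\zeta_n^{b},\,t^n-\zeta_n^{c})\twoheadrightarrow\k_{\widetilde{\Phi}_h}G$ and $\k[s]/(s^m-\zeta_m^{a\alpha})\twoheadrightarrow\k_{\widetilde{\Phi}_g}\Z_m$; since $\dim_{\k}\k_{\sigma}G=|G|$ equals the dimension of the polynomial quotient on the left, each of these surjections is an isomorphism. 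Hence a one-dimensional $\k_{\sigma}G$-module is exactly a choice of eigenvalues for the generators lying in the corresponding set of roots of unity, which yields necessity and sufficiency simultaneously. A last routine step unwinds the definitions $g\triangleright v=(g.v).g^{-1}$ and $e\triangleright(f\triangleright v)=\widetilde{\Phi}(e,f,g)(ef)\triangleright v$ of Section 2.2 to identify the eigenvalue of $u_g$ on $\k X$ with the scalar $\lambda_1$ in $g\triangleright X=\lambda_1 X$, and similarly for $\lambda_2,\eta_1,\eta_2,\mu_1,\mu_2$. I expect the main obstacle to be precisely the cocycle-product computation of the second paragraph: it is conceptually straightforward but demands careful bookkeeping with the floor functions and the modular reductions built into (2.11)--(2.12); by contrast the presentation-by-dimension-count argument is purely formal once one knows $\dim_{\k}\k_{\sigma}G=|G|$.
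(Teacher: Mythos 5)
Your overall strategy coincides with the paper's: the paper's proof likewise passes to the twisted group algebra, computes $g^{\star i}=\zeta_m^{-a(i-1)}g^i$ and $h^{\star i}=h^i$ in $\k^{\widetilde{\Phi}_g}G$, and reads off $\lambda_1^m=\zeta_m^a$, $\lambda_2^n=1$; your presentation-by-generators plus dimension count merely makes explicit the sufficiency direction that the paper leaves implicit, and your key observation $[\frac{(m-1)'+(m-r)'}{m}]=1$ for $1\le r\le m-1$ is exactly the point behind the paper's telescoping. For $\lambda_1,\lambda_2$, for $\eta_2$ (indeed $u_h^n=\zeta_n^{-c(n-1)}u_1=\zeta_n^c u_1$ in $\k^{\widetilde{\Phi}_h}G$), and for part 2 (indeed $u_e^m=\zeta_m^{-a\alpha(m-1)}u_1=\zeta_m^{a\alpha}u_1$), your asserted values check out.

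There is, however, one step where ``the same bookkeeping'' does not deliver what you assert. Putting $k=0,\ l=1$ in (2.11) gives $\widetilde{\Phi}_h(g,g^r)=\widetilde{\Phi}_{a,b,c}(g,g^r,h)=\zeta_n^{-b}$ for every $1\le r\le m-1$, so the telescoping product yields $u_g^m=\zeta_n^{-b(m-1)}u_1$ in $\k^{\widetilde{\Phi}_h}G$, hence $\eta_1^m=\zeta_n^{-b(m-1)}=\zeta_n^{b}\zeta_n^{-bm}$ rather than $\zeta_n^{b}$; the two agree exactly when $\zeta_n^{bm}=1$, i.e. $n\mid bm$. This holds whenever $b=0$ or $n\mid m$ (in particular in every case exploited in Section 3, where $(m,n)$ is $(p,p)$ or $(p^2,p)$), but not in general: for $m=2$, $n=4$, $b=1$ one finds $\eta_1^2=\zeta_4^{-1}\neq\zeta_4$. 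To be fair, this discrepancy is inherited from the statement itself, and the paper's own proof treats only the $\widetilde{\Phi}_g$ case and declares the rest ``similar,'' so it is not a defect of your method; but as a proof of the lemma as written, the sentence ``for $\sigma=\widetilde{\Phi}_h$ the same bookkeeping gives $u_g^m=\zeta_n^{b}u_1$'' is precisely the claim that does not survive being written out, and you should either record the exponent $-b(m-1)$ or state the hypothesis $n\mid bm$ under which it reduces to $b$.
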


\begin{proof}
We only prove the case in 1 for $\k X$ as the other cases can be proved similarly. Write the product of the twisted group algebra $\k^{\widetilde{\Phi}_{g}}G$ by $\star.$ By induction on $i$ it is very easy to get
\begin{equation}
g^{\star i}=\zeta_m^{-a(i-1)}g^i, \quad h^{\star i}=h^i.
\end{equation}
Then the claim follows by \[ \lambda_1^m X=g^{\star m} \triangleright X = \zeta_m^{-a(m-1)}g^m \triangleright X = \zeta_m^a X, \quad \lambda_2^n X=h^{\star n} \triangleright X = h^n \triangleright X = X.\]
\end{proof}

\subsection{The associated $(\k G,\Phi)$-Majid bimodules}
Keep the notations of Lemma 2.1. Now we recover the $(\k G,\Phi)$-Majid bimodule structures on $\k Q_1$ associated to the obtained projective representations in the sense of \cite{qha2}. For the case with $G=\langle g \rangle \times \langle h \rangle=\Z_m\times \Z_n,$ let $\Gamma_{(i,j)}^1$ and $\Gamma_{(i,j)}^2$ denote respectively the arrows $g^ih^j\to g^{i+1}h^j$ and $g^ih^j\to g^ih^{j+1}$ in $Q=Q(G,g+h).$ For the case with $G=<e>=\Z_m,$ let $X_i^1$ and $Y_i^1$ denote respectively the arrows $e^i\to e^{i+\alpha}$ and $e^i\to e^{i+\beta}$ in $Q=Q(G, g+h).$ Note that the bicomodule structure $(\delta_L, \delta_R)$ of $\k Q_1$ is defined according to the quiver structure. Namely, for the case with $G=\Z_m\times \Z_n,$
\begin{eqnarray}
\delta_L(\Gamma_{(i,j)}^1)=g^{i+1}h^j\otimes \Gamma_{(i,j)}^1, & \delta_R(\Gamma_{(i,j)}^1)=\Gamma_{(i,j)}^1\otimes g^ih^j,\\
\delta_L(\Gamma_{(i,j)}^2)=g^ih^{j+1}\otimes \Gamma_{(i,j)}^2, & \delta_R(\Gamma_{(i,j)}^2)=\Gamma_{(i,j)}^2\otimes g^ih^j,
\end{eqnarray}
and for case with $G=\Z_m,$
\begin{eqnarray}
\delta_L(X_i^1)=e^{i+\alpha}\otimes X_i^1, & \delta_R(X_i^1)=X_i^1\otimes e^i,\\
\delta_L(Y_i^1)=e^{i+\beta}\otimes Y_i^1, & \delta_R(Y_i^1)=Y_i^1\otimes e^i.
\end{eqnarray}
For quasi-bimodule structure, there is no harm to assume that
\begin{equation}
g^ih^j\cdot\Gamma_{(0,0)}^1=\Gamma_{(i,j)}^1,\ \ g^ih^j\cdot\Gamma_{(0,0)}^2=\Gamma_{(i,j)}^2
\end{equation}
for $G=\Z_m\times \Z_n$ and
\begin{equation}
e^i\cdot X_0^1=X_i^1,\ \ e^i \cdot Y_0^1=Y_i^1
\end{equation}
for $G=\Z_m.$ 
To make the notations consistent with those given in Subsection 2.4 and for the convenience of the exposition, in the following we write $X=\Gamma_{(0,0)}^1$ and $Y=\Gamma_{(0,0)}^2$ in case $G=\Z_m\times \Z_n,$ and $X=X_0^1$ and $Y=Y_0^1$ in case $G=\Z_m.$ 
The following proposition can be verified by routine computations.

\begin{proposition}
\begin{itemize}
\item[1.] The following equalities
\begin{eqnarray}
 g^ih^j\cdot \Gamma_{(s,t)}^1 &=& \zeta_m^{a[\frac{s+1}{m}]i}\zeta_n^{b[\frac{s+1}{m}]j}\Gamma_{(i+s,j+t)}^1, \\
 g^ih^j\cdot \Gamma_{(s,t)}^2 &=& \zeta_n^{c[\frac{t+1}{n}]j}\Gamma_{(i+s,j+t)}^2,\\
 \Gamma_{(i,j)}^1\cdot g^sh^t &=& (\zeta_m^a\lambda_1)^{-s}\lambda_2^{-t}\Gamma_{(i+j,s+t)}^1,\\
 \Gamma_{(i,j)}^2\cdot g^sh^t &=& (\zeta_n^b\eta_1)^{-s}(\zeta_n^c\eta_2)^{-t}\Gamma_{(i+j,s+t)}^2,
\end{eqnarray} for all $0 \leq i, s \leq m-1, \ 0 \leq j,t\leq n-1$
together with $(2.14-2.15)$ define a $(\k \Z_m\times \Z_n,\Phi_{a,b,c})$-Majid bimodule structure on $\k Q_1(\Z_m\times \Z_n,g+h)$ associated to the projective representation given in 1 of Lemma 2.1.

\item[ 2.] The following equalities
\begin{eqnarray}
e^i\cdot X_j^1=\zeta_m^{a[\frac{j+\alpha}{m}]i}X_{(i+j)}^1, \ \ e^i\cdot Y_j^1=\zeta_m^{a[\frac{j+\beta}{m}]i}Y_{(i+j)}^1 ,\\
X_i^1\cdot e^j=(\zeta_m^{a\alpha}\mu_1)^{-j}X_{(i+j)}^1, \ \  Y_i^1\cdot e^j=(\zeta_m^{a\beta}\mu_2)^{-j}Y_{(i+j)}^1,
\end{eqnarray} for all $0 \leq i,j \leq m-1$
together with $(2.16-2.17)$ define a $(\k\Z_m,\Phi_{a})$-Majid bimodule structure on $\k Q_1(\Z_m,g+h)$ associated to the projective representation given in 2 of Lemma 2.1.
\end{itemize}
\end{proposition}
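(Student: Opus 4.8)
The plan is to prove both statements in parallel, carrying out the argument for statement~1 (the case $G=\Z_m\times\Z_n=\langle g\rangle\times\langle h\rangle$) in detail and noting that statement~2 follows verbatim after replacing $\Phi_{a,b,c}$ by $\Phi_a$ and the pair of generators by $e^\alpha,e^\beta$. The $\k G$-bicomodule structure on $\k Q_1$ recorded in (2.12)--(2.13) is well defined because it merely encodes the source and target of each arrow, and any refining quasi-bimodule structure of the shape written down is automatically a bicomodule morphism by the degree bookkeeping $f.\,{}^{g'}V^{h'}.f^{-1}={}^{g'}V^{h'}$ of Subsection~2.2; so the real content of the proposition is that (2.18)--(2.21) satisfy the Majid bimodule axioms (2.1)--(2.3) and induce the projective representations of Lemma~2.1.

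First I would observe that the left action is forced. Every arrow is $g^ih^j\cdot\Gamma^1_{(0,0)}=\Gamma^1_{(i,j)}$ or $g^ih^j\cdot\Gamma^2_{(0,0)}=\Gamma^2_{(i,j)}$ by the normalization (2.14), with $\Gamma^1_{(0,0)}\in{}^g(\k Q_1)^1$ and $\Gamma^2_{(0,0)}\in{}^h(\k Q_1)^1$. Feeding $m=\Gamma^1_{(0,0)}$ into axiom (2.1) (so that the ``$g$'' of the axiom is the left degree $g$ and its ``$h$'' is the right degree $1$) and using that $\Phi$ is normalized gives $g^ih^j\cdot\Gamma^1_{(s,t)}=\Phi_{a,b,c}(g^ih^j,g^sh^t,g)\,\Gamma^1_{(i+s,j+t)}$; a one-line evaluation of (2.10) on the triple $(g^ih^j,g^sh^t,g^1h^0)$, in which the factor $\zeta_n^{c[\frac{t}{n}]j}$ disappears since $0\le t<n$, yields (2.18), and the same computation on $(g^ih^j,g^sh^t,g^0h^1)$ yields (2.19). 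For the right action I would use axiom (2.3) in the form $\Gamma^1_{(i,j)}\cdot g^sh^t=\Phi_{a,b,c}(g^ih^j,g,g^sh^t)^{-1}\,\bigl(g^ih^j\cdot(\Gamma^1_{(0,0)}\cdot g^sh^t)\bigr)$, which reduces everything to knowing $\Gamma^1_{(0,0)}\cdot g^sh^t$; the latter is pinned down by demanding that $g\triangleright X:=(g\cdot X)\cdot g^{-1}$ and $h\triangleright X:=(h\cdot X)\cdot h^{-1}$ act on $X=\Gamma^1_{(0,0)}$ by the prescribed scalars $\lambda_1,\lambda_2$ (and likewise $\eta_1,\eta_2$ on $Y=\Gamma^2_{(0,0)}$). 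Writing $g^{-1}=g^{m-1}$ and $h^{-1}=h^{n-1}$ and absorbing the stray roots of unity by means of the constraints $\lambda_1^m=\zeta_m^a,\ \lambda_2^n=1$ of Lemma~2.1 then produces exactly the scalars $(\zeta_m^a\lambda_1)^{-s}\lambda_2^{-t}$ and $(\zeta_n^b\eta_1)^{-s}(\zeta_n^c\eta_2)^{-t}$ appearing in (2.20)--(2.21).

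What remains — and this is the part I expect to be the real work — is to check that the four formulas, now regarded as an ansatz, satisfy all of (2.1)--(2.3) simultaneously and consistently. After substituting the formulas, axioms (2.1) and (2.2) each collapse to a single instance of the $3$-cocycle identity $\Phi(b,c,d)\Phi(a,bc,d)\Phi(a,b,c)=\Phi(ab,c,d)\Phi(a,b,cd)$ for $\Phi_{a,b,c}$ (take $d=g$ or $d=h$ for (2.1), and $a=g$ or $a=h$ for (2.2)), so they hold for free. The mixed axiom (2.3) is the crux: once the formulas are inserted it becomes an identity among the values $\Phi_{a,b,c}(-,-,g)$, $\Phi_{a,b,c}(-,-,h)$, $\Phi_{a,b,c}(g,-,-)$, $\Phi_{a,b,c}(h,-,-)$ and the scalars $\lambda_i,\eta_i$, which I would again derive from the $3$-cocycle equation together with the relations of Lemma~2.1; the one genuine nuisance is keeping track of the floor-function exponents $[\frac{k+s}{m}]$ and $[\frac{t+l}{n}]$ under the index shifts $s\mapsto s+1$, $t\mapsto t+1$ dictated by the degrees, which is handled by splitting into the cases according to whether the relevant partial sums reach $m$ (respectively $n$). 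Finally, that the Majid bimodule thus obtained is the one associated to the representation of Lemma~2.1 under the equivalence $\mathcal{M}\mathcal{B}(\k G,\Phi)\cong\prod_{g}\rep(G,\widetilde{\Phi}_g)$ is then immediate: setting $(i,j)=(0,0)$ in (2.18) and (2.20) and forming $g\triangleright X$ and $h\triangleright X$ returns precisely $\lambda_1 X$ and $\lambda_2 X$ (and similarly for $Y$), which is exactly the data singling out that object on the right-hand side. As a shortcut one could instead unwind the inverse of the equivalence of \cite{qha2} applied to the one-dimensional representations of Lemma~2.1 and simply observe that (2.18)--(2.21) is a closed formula for its output; but the direct verification above is no longer and keeps the exposition self-contained.
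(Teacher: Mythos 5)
Your proposal is correct and matches the paper's approach: the paper gives no details, stating only that the proposition ``can be verified by routine computations,'' and your outline (forcing the formulas from the bimodule axioms, the normalization (2.14)--(2.17) and Lemma 2.1, then checking the axioms against the explicit 3-cocycle $\Phi_{a,b,c}$) is precisely that routine verification. One minor bookkeeping point: axiom (2.2) is not a consequence of the 3-cocycle identity alone --- because of the wrap-around $g^m=1$, $h^n=1$ in the right action, the constraints $\lambda_1^m=\zeta_m^a$, $\lambda_2^n=1$ (and the analogous ones for $\eta_1,\eta_2,\mu_1,\mu_2$) from Lemma 2.1 are needed there as well as in (2.3), but since you invoke exactly those relations anyway the check goes through unchanged.
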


\subsection{Classification results}
Now we are in the position to give the classification of the pointed Majid algebras satisfying (R1-R2). We need some  notations of quantum binomial coefficients in what follows. For any $\hbar \in \k$, define $l_\hbar=1+\hbar+\cdots +\hbar^{l-1}$ and $l!_\hbar=1_\hbar \cdots l_\hbar$. The Gaussian binomial coefficient is defined by $\binom{l+m}{l}_\hbar:=\frac{(l+m)!_\hbar}{l!_\hbar m!_\hbar}$.
\begin{lemma}
\begin{itemize}
\item[1.] Suppose $\k Q_1(\Z_m\times \Z_n,g+h)$ is endowed with the $(\k \Z_m\times \Z_n,\Phi_{a,b,c})$-Majid bimodule structure as given in 1 of Proposition 2.2. Then in the corresponding quiver Majid algebra $\k Q(\Z_m\times \Z_n,g+h)$ we have
\begin{eqnarray}
X^{\ast \vec{l}}&=&l!_{\zeta_m^{-a}\lambda_1^{-1}}[g^{l-1}\cdot X] [g^{l-2}\cdot X]\cdots [g \cdot X ]X ,\\
Y^{\ast \vec{l}}&=&l!_{\zeta_n^{-c}\eta_2^{-1}}[h^{l-1}\cdot Y][h^{l-2}\cdot Y]\cdots [ h \cdot Y]Y.
\end{eqnarray}
\item[2.] Suppose $\k Q_1(\Z_m,g+h)$ is endowed with the $(\k \Z_m,\Phi_{a})$-Majid bimodule structure as given in 2 of Proposition 2.2. Then in the corresponding quiver Majid algebra $\k Q(\Z_m,g+h)$ we have
\begin{eqnarray}
X^{\ast \vec{l}}&=&l!_{\zeta_m^{-a\alpha^2}\mu_1^{-\alpha}}[g^{l-1}\cdot X] [g^{l-2}\cdot X]\cdots [g \cdot X ]X ,\\
Y^{\ast \vec{l}}&=&l!_{\zeta_m^{-a\beta^2}\mu_2^{-\beta}}[h^{l-1}\cdot Y][h^{l-2}\cdot Y]\cdots [h \cdot Y ]Y.
\end{eqnarray}
\end{itemize}
\end{lemma}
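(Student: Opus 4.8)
The plan is to establish all four identities by induction on $l$, using the recursion $X^{\ast\vec{l+1}}=X^{\ast\vec l}\ast X$ (and its $Y$-analogue), the quantum shuffle product formula of Subsection~2.3, and the explicit Majid bimodule structures computed in Proposition~2.2. I would write out the argument only for $X^{\ast\vec l}$ in case~1; the three other cases follow by the same reasoning after replacing the relevant one-sided actions. The case $l=1$ is trivial, both sides being $X$.

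For the inductive step, suppose the formula holds for $l$, abbreviate $\zeta:=\zeta_m^{-a}\lambda_1^{-1}$, and write $P_l$ for the length-$l$ path $[g^{l-1}\cdot X]\cdots[g\cdot X]X$, so that $X^{\ast\vec l}=l!_\zeta\,P_l$ by hypothesis. The first point I would record is that the left and right $\k G$-actions on $\k Q_1$ send each ``type-1'' arrow $\Gamma^1_{(s,t)}$ to a scalar multiple of a type-1 arrow; hence the quantum shuffle product of paths built only from $X$ again involves only type-1 arrows, and since these arrows merely raise the $g$-exponent, $P_l$ is the \emph{unique} path of length $l$ from $1$ to $g^l$ that can occur, while $P_{l+1}$ is the unique path of length $l+1$ from $1$ to $g^{l+1}$. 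Therefore in $X^{\ast\vec l}\ast X=l!_\zeta\sum_{d\in D^{l+1}_l}(P_l\ast X)_d$ every summand is a scalar multiple of $P_{l+1}$, and the problem reduces to computing and adding those $l+1$ scalars.

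Each $d\in D^{l+1}_l$ is specified by the slot $k\in\{1,\dots,l+1\}$ carrying the single arrow $X$ coming from the right-hand factor. I would then read off the thin split $d$: in the lowest $k-1$ slots the arrows $X,g\cdot X,\dots,g^{k-2}\cdot X$ of $P_l$ are acted on the right by the source vertex $1$ of $X$ and are unchanged; in slot $k$ the arrow $X=\Gamma^1_{(0,0)}$ is acted on the left by the vertex $g^{k-1}$ reached so far, which by the left-action formula of Proposition~2.2 equals $\Gamma^1_{(k-1,0)}$ with coefficient $1$ (as $[\tfrac1m]=0$); and in the top $l-k+1$ slots the arrows $g^{k-1}\cdot X,\dots,g^{l-1}\cdot X$ are acted on the right by the target vertex $g$ of $X$, each acquiring the factor $(\zeta_m^a\lambda_1)^{-1}=\zeta$ from the right-action formula of Proposition~2.2 and moving to $\Gamma^1_{(k,0)},\dots,\Gamma^1_{(l,0)}$. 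Concatenating, $(P_l\ast X)_d=\zeta^{\,l-k+1}P_{l+1}$, so that
\[ X^{\ast\vec{l+1}}=l!_\zeta\Bigl(\sum_{k=1}^{l+1}\zeta^{\,l-k+1}\Bigr)P_{l+1}=l!_\zeta\,(l+1)_\zeta\,P_{l+1}=(l+1)!_\zeta\,P_{l+1} \]
by the identity $\sum_{j=0}^{l}\zeta^{\,j}=(l+1)_\zeta$ and the recursion $(l+1)!_\zeta=l!_\zeta(l+1)_\zeta$; this is the claim for $l+1$. For $Y^{\ast\vec l}$ in case~1 and for $X^{\ast\vec l},Y^{\ast\vec l}$ in case~2 I would copy the argument verbatim using the corresponding action formulas of Proposition~2.2, the only change being the parameter produced by the right action, namely $\zeta_n^{-c}\eta_2^{-1}$, $\zeta_m^{-a\alpha^2}\mu_1^{-\alpha}$, and $\zeta_m^{-a\beta^2}\mu_2^{-\beta}$, respectively.

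The step I expect to be the main obstacle is precisely the bookkeeping of the thin splits in the third paragraph: one must determine, for each $d$, which group element sits in each slot --- equivalently, which of the two one-sided actions of Proposition~2.2 acts in each bracket --- and then confirm that all $l+1$ resulting monomials really are proportional to the \emph{same} path $P_{l+1}$. Once that is in place there are no associativity subtleties (the bracketing of $X^{\ast\vec l}$ is fixed, so $X^{\ast\vec{l+1}}=X^{\ast\vec l}\ast X$ holds on the nose), and the collapse of the resulting geometric sum to the Gaussian coefficient is purely formal.
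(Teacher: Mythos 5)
Your argument is correct and follows essentially the same route as the paper's proof: induction on $l$ via $X^{\ast\vec{l+1}}=X^{\ast\vec l}\ast X$, expansion of the quantum shuffle product over thin splits using the action formulas of Proposition~2.2, observing that every summand is a scalar multiple of the same path, and summing the resulting geometric series to get the Gaussian factorial. Your bookkeeping of the splits (trivial right action by the source vertex below the inserted arrow, coefficient-one left action in the inserted slot, factor $(\zeta_m^a\lambda_1)^{-1}$ from the right action by the target above it) is exactly the computation the paper carries out, just spelled out more explicitly.
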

\begin{proof}
By (2.20) and (2.22), we have
\begin{equation*}
g\cdot X = \zeta_m^{a}\lambda_1 X\cdot g.
\end{equation*}
We will use  induction on $l$ to prove the identity. If $l=2$, then we have
\begin{equation*}
\begin{split}
X\ast X &=[g\cdot X][X\cdot 1]+[X\cdot g][1\cdot X]\\
     &=(1+\zeta_m^{-a}\lambda_1^{-1})[g \cdot X ]X.
\end{split}
\end{equation*}
For $l-1$, assume that $X^{\ast \overrightarrow{l-1}}=(l-1)!_{\zeta_m^{-a}\lambda_1^{-1}}[g^{l-2}\cdot X] [g^{l-3}\cdot X]\cdots [g \cdot X ]X .$ Because $g^i\cdot X $ is a scalar multiple of arrow $g^i\to g^{i+1}$, hence $X^{\ast \overrightarrow{l-1}}$ is a scalar  multiple of the path $1\to g\to \cdots \to g^{l-1}.$ Hence
\begin{equation*}
\begin{split}
X^{\ast \vec{l}}=& X^{\ast \overrightarrow{l-1}}\ast X=(l-1)!_{\zeta_m^a\lambda_1}([g^{l-2}\cdot X] [g^{l-3}\cdot X]\cdots [g \cdot X ]X )\ast X \\
=&(l-1)!_{\zeta_m^a\lambda_1}([g^{l-1}\cdot X][g^{l-2}\cdot X] [g^{l-3}\cdot X]\cdots [g \cdot X ]X \\
&+ [(g^{l-2}\cdot X)\cdot g][g^{l-2}\cdot X][g^{l-3}\cdot X]\cdots  [g \cdot X ]X +\cdots \\
&+ [(g^{l-2}\cdot X)\cdot g][(g^{l-3}\cdot X)\cdot g]\cdots  [( g\cdot X)\cdot g][X\cdot g]X)\\
=&(l-1)!_{\zeta_m^{-a}\lambda_1^{-1}}(1+\zeta_m^{-a}\lambda_1^{-1}+ \cdots + \zeta_m^{-(l-1)a}\lambda_1^{-(l-1)}) [g^{l-1}\cdot X] [g^{l-2}\cdot X]\cdots [g\cdot X ]X\\
=&l!_{\zeta_m^{-a}\lambda_1^{-1}}[g^{l-1}\cdot X] [g^{l-2}\cdot X]\cdots [g \cdot X ]X.
\end{split}
\end{equation*}
So (2.26) is proved. The other equations (2.27-2.29) can be proved in a similar manner.
\end{proof}

\begin{proposition}
\begin{itemize}
\item[1.] $\k Q(\Z_m\times \Z_n,g+h)$ admits a graded Majid algebra structure with associator $\Phi_{a,b,c}$ such that the Majid subalgebra generated by $\{X,Y\}$ and the vertex group satisfies (R1-R2) if and only if \begin{equation}mx+ny+(m+1)b\equiv 0 \ \mod \ mn \end{equation} has a solution $(x,y)\in \Z\times\Z$ and $\lambda_1\neq 1, \ \eta_2\neq 1.$
\item[2.]
$\k Q(\Z_m,e^\alpha+e^\beta)$ admits a graded Majid algebra structure with associator $\Phi_{a}$ such that the Majid subalgebra generated by $\{X,Y\}$ and the vertex group satisfies (R1-R2) if and only if \begin{equation}m(\beta x+\alpha y+2a\alpha\beta)+ 2a\alpha\beta\equiv 0 \ \mod \ m^2 \end{equation} has a solution $(x,y)\in \Z\times \Z$ and $\mu_1\neq 1, \ \mu_2\neq 1.$
\end{itemize}
\end{proposition}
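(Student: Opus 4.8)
The plan is to translate conditions (R1) and (R2) for the Majid subalgebra $M\subseteq\k Q$ into explicit numerical constraints on the parameters $\lambda_1,\lambda_2,\eta_1,\eta_2$ (resp. $\mu_1,\mu_2$ in the cyclic case) furnished by Lemma 2.1 and Proposition 2.2, using the quantum shuffle product and Lemma 2.3. I treat part 1 in detail; part 2 is entirely parallel.

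First I would dispose of (R2). A direct quantum shuffle computation gives
\[
X\ast Y=[g\cdot Y]\,X+[X\cdot h]\,Y,\qquad Y\ast X=[h\cdot X]\,Y+[Y\cdot g]\,X,
\]
where on each side the two summands are scalar multiples of the two distinct length-$2$ paths $1\to g\to gh$ and $1\to h\to gh$. Evaluating the Majid-bimodule actions via Proposition 2.2 (so that $[g\cdot Y]$ and $[Y\cdot g]$ are proportional to $\Gamma^2_{(1,0)}$, while $[X\cdot h]$ and $[h\cdot X]$ are proportional to $\Gamma^1_{(0,1)}$) and comparing coefficients along these two paths, one finds that $X\ast Y=q\,Y\ast X$ holds for some $q\in\k^{*}$ if and only if $\lambda_2\eta_1\zeta_n^{b}=1$, in which case $q=\lambda_2^{-1}$; once this identity holds in $\k Q$ it propagates to all words in $X,Y$ by quasi-associativity, so it yields (R2) for $M$. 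Next, recalling the constraints $\lambda_1^{m}=\zeta_m^{a}$, $\lambda_2^{n}=1$, $\eta_1^{m}=\zeta_n^{b}$, $\eta_2^{n}=\zeta_n^{c}$ from Lemma 2.1, I would substitute $\eta_1=\lambda_2^{-1}\zeta_n^{-b}$ into $\eta_1^{m}=\zeta_n^{b}$; writing $\lambda_2=\zeta_n^{y}$ this becomes $my+(m+1)b\equiv 0\pmod n$, and an elementary argument shows this is solvable in $y$ exactly when $mx+ny+(m+1)b\equiv 0\pmod{mn}$ is solvable in $(x,y)$, both being equivalent to $(m,n)\mid(m+1)b$. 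This identifies the displayed congruence as precisely the condition under which (R2) is compatible with the admissible parameter family.

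It remains to handle (R1), i.e.\ finite-dimensionality. Since the vertex group is finite and, once (R2) holds, any product of copies of $X$, $Y$ and group elements rebrackets and reorders (up to nonzero scalars, including values of $\Phi$ coming from quasi-associativity) into a scalar multiple of some $g\cdot(X^{\ast\vec{i}}\ast Y^{\ast\vec{j}})$, finite-dimensionality of $M$ is equivalent to nilpotency of $X$ and of $Y$ in $\k Q$. By Lemma 2.3, $X^{\ast\vec{l}}$ is a nonzero scalar multiple of a single length-$l$ path unless $l!_{\zeta_m^{-a}\lambda_1^{-1}}=0$, so $X$ is nilpotent iff $\zeta_m^{a}\lambda_1\neq 1$; combined with $\lambda_1^{m}=\zeta_m^{a}$ this is equivalent to $\lambda_1\neq 1$ (if $\lambda_1=1$ then $\zeta_m^{a}=1$, hence $\zeta_m^{a}\lambda_1=1$; conversely $\zeta_m^{a}\lambda_1=1$ forces $\lambda_1=\zeta_m^{-a}$, whence $1=\lambda_1^{m}=\zeta_m^{a}$ and $\lambda_1=1$). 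Symmetrically $Y$ is nilpotent iff $\eta_2\neq 1$. Assembling the three inputs, and verifying the converse — that when the congruence holds and $\lambda_1\neq 1$, $\eta_2\neq 1$ one may choose admissible parameters for which $M$ is exactly the bosonization of the quasi-quantum plane with relations $X^{\vec{N}_1}=0$, $Y^{\vec{N}_2}=0$, $XY=qYX$ — gives part 1. Part 2 is the same, using parts 2 of Lemmas 2.1, 2.3 and Proposition 2.2; substituting the (R2)-relation into $\mu_1^{m}=\zeta_m^{a\alpha}$ and $\mu_2^{m}=\zeta_m^{a\beta}$ produces $m(\beta x+\alpha y+2a\alpha\beta)+2a\alpha\beta\equiv 0\pmod{m^{2}}$, and nilpotency again reads $\mu_1\neq 1$, $\mu_2\neq 1$.

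The step I expect to be the main obstacle is the sufficiency direction: checking that once the numerical conditions hold, the subalgebra of $\k Q$ generated by the vertex group and $\{X,Y\}$ is genuinely finite-dimensional with no hidden relations — equivalently that the spanning set $\{g\cdot X^{\ast\vec{i}}\ast Y^{\ast\vec{j}}\}$ is a basis — and that the resulting object is an honest graded pointed Majid algebra realizing (R1-R2). This needs a PBW-type linear-independence argument inside $\k Q$ (or an appeal to the bosonization machinery of \cite{qha1, qha2}), in contrast to the purely combinatorial coefficient-matching that drives the necessity direction; the number-theoretic reformulation of the $\eta_1$-constraint as the displayed congruence is the other place where some care is required.
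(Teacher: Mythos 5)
Your proposal is correct and follows essentially the same route as the paper's proof: the quantum shuffle computation of $X\ast Y$ and $Y\ast X$, comparison along the two length-two paths forcing $q=\lambda_2^{-1}=\zeta_n^{b}\eta_1$, translation via the constraints of Lemma 2.1 into the displayed congruence, and Lemma 2.3 giving nilpotency exactly when $\lambda_1\neq 1$, $\eta_2\neq 1$, with the converse obtained by choosing admissible parameters. Your small variants — deriving the condition as $my+(m+1)b\equiv 0 \ (\mathrm{mod}\ n)$ and matching its solvability with the mod $mn$ congruence through $(m,n)\mid (m+1)b$, and the direct argument that $\zeta_m^{a}\lambda_1\neq 1$ iff $\lambda_1\neq 1$ instead of the paper's $(m^2,m+1)=1$ argument — are equivalent reformulations, and the sufficiency direction is treated in the paper just as briefly as you anticipate.
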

\begin{proof}
We only prove case 1 as the proof for case 2 is similar and much easier. Suppose that the Majid subalgebra of $\k Q(\Z_m \times \Z_n,g+h)$ generated by $\{X,Y\}$ and the vertex group satisfies (R1-R2). The condition $X\ast Y=qY\ast X$ for some $q\in \k^*$ is equivalent to
$$[X\cdot h]Y+ [g\cdot Y]X=q([Y\cdot g]X+[h\cdot X]Y).$$ This implies that $h\cdot X=q^{-1}X\cdot h$ and $g\cdot Y= q Y\cdot g.$ Recall that \cite{qha2} we can define $f \triangleright X =( f \cdot X) \cdot f^{-1}, \ \forall f \in G$ and this makes $\k X$ a $(G,\widetilde{\Phi}_{g})$-projective representation. Similarly we may get a projective representation structure on $\k Y.$ On the other hand, with the notations of Lemma 2.1, one has
\begin{equation*}
h\cdot X=(h\cdot X)(h^{-1}h)=\frac{\Phi_{a,b,c}(gh,h^{-1},h)}{\Phi_{a,b,c}(h,h^{-1},h)}(h\triangleright X)\cdot h=\lambda_2 X\cdot h,
\end{equation*} and
\begin{equation*}
g\cdot Y=(g\cdot Y)(g^{-1}g)=\frac{\Phi_{a,b,c}(gh,g^{-1},g)}{\Phi_{a,b,c}(g,g^{-1},g)}(g\triangleright Y)\cdot g=\zeta_n^b \eta_1 Y\cdot g.
\end{equation*}  These force $q^{-1}=\lambda_2$ and  $q=\zeta_n^b \eta_1.$ Now we may apply Lemma 2.1 to get $\lambda_2=\zeta_n^\beta$ and $\eta_1=\zeta_{mn}^b\zeta_m^\gamma$ for some $0\leq \beta \leq n-1, \ 0\leq \gamma \leq m-1.$ Here $\zeta_{mn}$ is an $m$-th root of $\zeta_n.$  Then we have $$1=q^{-1}q=\lambda_2\zeta_n^b\eta_1=\zeta_n^\beta\zeta_n^b\zeta_{mn}^b\zeta_m^{\gamma},$$  which is equivalent to saying
\begin{equation*}m\beta+n\gamma+(m+1)b \equiv 0 \ \mod \ mn. \end{equation*}
By Lemma 2.3, $X^{\ast \vec{N}_1}=0$ and $Y^{\ast \vec{N}_2}=0$ for $N_1 = |\zeta_m^a\lambda_1|, \ N_2=|\zeta_n^c\eta_2|.$ Here $| \cdot |$ means the multiplicative order of a root of unity. The condition of $N_1>1, \ N_2>1$ forces $\zeta_m^a\lambda_1 \neq 1, \ \zeta_n^c\eta_2\neq 1.$ If $a=0,$ then clearly $\lambda_1 \ne 1.$ If $a \ne 0,$ then by Lemma 2.1, $\lambda_1^m=\zeta_m^a,$ and the condition becomes $\lambda_1^{m+1}\neq 1.$ Note that $\lambda_1$ is an $m^2$-th root of unity, and $\lambda_1^{m+1} \neq 1$ is equivalent to $\lambda_1 \ne 1$ as $(m^2, m+1)=1.$ Similarly we have the condition $\eta_2 \neq 1.$

Conversely, if equation (2.30) has a solution in $\Z\times \Z$ and assume that $\lambda_1 \ne 1, \ \eta_2\neq 1$ are roots of unity, then we can choose a quadruple $(\lambda_1,\lambda_2,\eta_1,\eta_2)$ satisfying the conditions of Lemma 2.1 to define projective representations of $\Z_m \times \Z_n$ and get the associated $(\k G, \Phi)$-Majid bimodule as in Proposition 2.2. Then by Lemma 2.3 and a direct computation of $X * Y$ and $Y * X$ as above, it is clear that the corresponding Majid subalgebra satisfies (R1-R2).
\end{proof}

In what follows, we denote the sets of all integer solutions of (2.30) and (2.31) by $A$ and $B$ respectively. Let $\zeta_{(m,n)m}$ be an $m$-th root of $\zeta_{(m,n)},$ and $\zeta_{m^2}$ an $m$-th root of $\zeta_{m}.$  The previous proposition implies the set of graded pointed Majid algebras satisfying (R1-R2) are in one-to-one correspondence to
\begin{equation}\widetilde{A}= \{(\lambda_1,\lambda_2,\eta_1,\eta_2)| \lambda_1^m=\zeta_m^a,\ \lambda_2=\zeta_n^x ,\  \eta_1=\zeta_m^y \zeta_{mn}^b,\  \eta_2^n=\zeta_n^c; \ \lambda_1 \ne 1, \ \eta_2\neq 1; (x,y)\in A\}
\end{equation}
 if $G=\langle g\rangle \times \langle h\rangle= \Z_m\times \Z_n,$ or to
\begin{equation}\widetilde{B}=\{(\mu_1,\mu_2)|\mu_1=\zeta_{m^2}^{a\alpha}\zeta_m^x,\  \mu_2=\zeta_{m^2}^{a\beta}\zeta_m^y;\  \mu_1^{m+\alpha}\neq 1, \ \mu_2^{m+\beta}\neq 1; (x,y)\in B \}
\end{equation}
if $G=\langle e\rangle=\Z_m.$

Combining Propositions 2.2 and 2.4, we have the following classification results.

\begin{theorem}
\begin{itemize}
\item[1.] Given a quadruple $(\lambda_1,\lambda_2,\eta_1,\eta_2) \in \widetilde{A},$ one can associate to it a graded pointed Majd algebra $M(\lambda_1,\lambda_2,\eta_1,\eta_2)$ with generators a group $\langle g\rangle \times \langle h\rangle = \Z_m\times \Z_n$ and two skew-primitive elements $\{X,Y\}$ subject to relations
\begin{gather}
\Delta(X)=X\otimes 1+ g\otimes X,\ \ \ \ \ \Delta(Y)=Y\otimes 1+ h\otimes Y, \\
gX=\zeta_m^a\lambda_1Xg,\  hX=\lambda_2Xh,\  gY=\zeta_n^b\eta_1Yg,\  hY=\zeta_n^c\eta_2Yh, \\
X^{\vec{N}_1}=0,\ Y^{\vec{N}_2}=0, \ XY=\lambda_2^{-1}YX, \ where \ N_1=|\zeta_m^{a}\lambda_1|, \ N_2=|\zeta_n^c\eta_2|,
\end{gather}
and with associator concentrated at degree zero given by
\begin{equation}
\Phi_{a,b,c}(g^ih^j,g^sh^t,g^kh^l)=\zeta_m^{a[\frac{k+s}{m}]i}\zeta_n^{b[\frac{k+s}{m}]j}\zeta_n^{c[\frac{t+l}{n}]j}
\end{equation}
for $0\leq a<m,$ $0\leq b<(m,n)$ and $0\leq c<n$ such that at least one of $\{a,b,c\}$ is nonzero.

Any finite dimensional graded pointed Majid algebra $M$ with $M_0=\k \langle g\rangle \times \langle h\rangle=\k \Z_m\times \Z_n$ and satisfies (R1-R2) such that $\Delta(X)=X\otimes 1+ g\otimes X,$ and $\Delta(Y)=Y\otimes 1+ h\otimes Y$ must be twist equivalent to one of the $M(\lambda_1,\lambda_2,\eta_1,\eta_2)$ described above.
\item[2.]
Given a pair $(\mu_1,\mu_2) \in \widetilde{B},$ one can associate to it a graded pointed Majd algebra $M(\mu_1,\mu_2)$ with generators a group $<e>=\Z_m$ and two skew-primitive elements $\{X,Y\}$ subject to relations
\begin{gather}
\Delta(X)=X\otimes 1+ e^\alpha\otimes X,\quad \Delta(Y)=Y\otimes 1+ e^\beta \otimes Y, \\
eX=\zeta_m^{a\alpha}\mu_1Xe,\ \  eY=\zeta_m^{a\beta}\mu_2Ye, \\
X^{\vec{N}_1}=0,\ Y^{\vec{N}_2}=0, \ XY=\zeta_m^{a\alpha\beta}\mu_2^\alpha YX, \ where \ N_1=|\zeta_m^{a\alpha^2}\mu_1^{\alpha}|, \ N_2=|\zeta_m^{a\beta^2}\mu_2^\beta |,
\end{gather}
and with associator concentrated at degree zero given by
\begin{equation}
\Phi_{a}(e^i,e^j,e^k)=\zeta_m^{a[\frac{k+j}{m}]i}
\end{equation}
for $1\leq a<m.$

Any finite dimensional graded pointed Majid algebra $M$ with $M_0=\k \langle e \rangle=\k \Z_m$ and satisfies (R1-R2) such that $\Delta(X)=X\otimes 1+ e^\alpha\otimes X,$ and $\Delta(Y)=Y\otimes 1+ e^\beta \otimes Y$ must be twist equivalent to one of the $M(\mu_1,\mu_2)$ described above.
\end{itemize}
\end{theorem}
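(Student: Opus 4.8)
The plan is to assemble the theorem from Lemma 2.1, Lemma 2.3 and Propositions 2.2 and 2.4, which already carry all the computational content; what remains is to package it as a presentation by generators and relations and to invoke the quiver realization for the exhaustiveness clause. I treat part 1 in detail, part 2 being entirely analogous with $\Z_m\times\Z_n$ and $g,h$ replaced by $\Z_m=\langle e\rangle$ and $e^\alpha,e^\beta$, and $\Phi_{a,b,c}$ replaced by $\Phi_a$, using the second items of Lemmas 2.1, 2.3 and Propositions 2.2, 2.4. So fix $(\lambda_1,\lambda_2,\eta_1,\eta_2)\in\widetilde{A}$. By Lemma 2.1(1) these scalars define one-dimensional projective representations of $G=\Z_m\times\Z_n$ on $\k X$ (with respect to $\widetilde{\Phi}_g$) and on $\k Y$ (with respect to $\widetilde{\Phi}_h$); Proposition 2.2(1) assembles them into a $(\k G,\Phi_{a,b,c})$-Majid bimodule on $\k Q_1(\Z_m\times\Z_n,g+h)$, and the quantum shuffle product of Subsection 2.3 promotes it to a graded Majid algebra structure on $\k Q(\Z_m\times\Z_n,g+h)$ whose associator is concentrated in degree zero and equal to the $3$-cocycle $\Phi_{a,b,c}$ of (2.37) (the associator of a coradically graded Majid algebra has degree zero, hence is determined by its restriction to the coradical $\k G=\k Q_0$). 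Define $M(\lambda_1,\lambda_2,\eta_1,\eta_2)$ to be the sub-Majid algebra generated by the vertex group $G$ and the two length-one arrows $X\colon 1\to g$, $Y\colon 1\to h$. The coproduct relations (2.34) are those of the path coalgebra; the commutation relations (2.35) are read off from the bimodule action (2.20)--(2.23); the relations $X^{\ast \vec{N}_1}=0$, $Y^{\ast \vec{N}_2}=0$ with $N_1=|\zeta_m^a\lambda_1|$ and $N_2=|\zeta_n^c\eta_2|$ follow from Lemma 2.3(1), since $l!_\hbar$ vanishes exactly when $l$ is the multiplicative order of $\hbar$; and $XY=\lambda_2^{-1}YX$ is the computation of $X\ast Y$ against $Y\ast X$ carried out in the proof of Proposition 2.4 together with the identification $q^{-1}=\lambda_2$. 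Hypothesis (R1) is built into the definition and (R2) is precisely $XY=\lambda_2^{-1}YX$; the requirement that not all of $a,b,c$ vanish just restricts the list to the genuinely non-coassociative cases (if $a=b=c=0$ the associator is trivial and $M$ is an ordinary graded pointed Hopf algebra).

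\emph{Exhaustiveness up to twist.} Conversely, let $M$ be finite dimensional, graded, pointed, with $M_0=\k(\Z_m\times\Z_n)$, satisfying (R1--R2) and $\Delta(X)=X\otimes1+g\otimes X$, $\Delta(Y)=Y\otimes1+h\otimes Y$. By the Gabriel-type theorem for pointed Majid algebras \cite{qha1}, $M$ is a graded sub-Majid algebra of the quiver Majid algebra on a unique Hopf quiver; since by (R1) it is generated by $G$ and the two arrows $X\colon1\to g$, $Y\colon1\to h$, that quiver is $Q(\langle g,h\rangle,g+h)$. The associator restricts to a normalized $3$-cocycle on $\Z_m\times\Z_n$, and replacing $M$ by a twist-equivalent Majid algebra — which alters neither the hypotheses nor the desired conclusion and, the twist being by a cochain on $\k G$, leaves the underlying coalgebra intact — we may assume by \cite{bgrc1} that this cocycle is one of the representatives $\Phi_{a,b,c}$. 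Then $\k Q_1$ is a $(\k G,\Phi_{a,b,c})$-Majid bimodule; by the equivalence of Subsection 2.2 and the reduction recalled at the end of Subsection 2.4 we may take $\k X$ and $\k Y$ to be one-dimensional projective representations, so by Lemma 2.1(1) this bimodule is exactly the one of Proposition 2.2(1) for some $\lambda_1,\lambda_2,\eta_1,\eta_2$ with $\lambda_1^m=\zeta_m^a$, $\lambda_2^n=1$, $\eta_1^m=\zeta_n^b$, $\eta_2^n=\zeta_n^c$. Proposition 2.4(1) then shows that (R1--R2) together with $\dim M<\infty$ force $\lambda_1\ne1$, $\eta_2\ne1$ and the congruence (2.30), that is $(\lambda_1,\lambda_2,\eta_1,\eta_2)\in\widetilde{A}$. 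Finally, since $M$ is graded and generated in degrees $0$ and $1$, it coincides with the sub-Majid algebra of $\k Q(\langle g,h\rangle,g+h)$ generated by $G$ and $\{X,Y\}$ built from this bimodule by the quantum shuffle product, i.e. $M\cong M(\lambda_1,\lambda_2,\eta_1,\eta_2)$; hence $M$ is twist equivalent to a member of the list.

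\emph{Main obstacle.} The step that goes beyond bookkeeping is to confirm that the displayed relations form a \emph{complete} set — equivalently that $\dim M(\lambda_1,\lambda_2,\eta_1,\eta_2)=mn\cdot N_1 N_2$, with the quasi-quantum plane $R=M^{\operatorname{coinv}M_0}$ carrying the basis $\{X^{\ast \vec{i}}\ast Y^{\ast \vec{j}}\mid 0\le i<N_1,\ 0\le j<N_2\}$. I would obtain this from the quasi-version of Majid's bosonization (cf. \cite{ap}) together with the Gaussian-binomial identities behind Lemma 2.3: the relations (2.35), the skew-commutativity $XY=\lambda_2^{-1}YX$ and Lemma 2.3 show that these monomials span, while a quantum-binomial computation as in the proof of Lemma 2.3 shows they remain linearly independent in the quantum shuffle algebra up to the indicated degrees. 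A subsidiary point worth recording explicitly is that the normalizing twist used in the exhaustiveness argument can be chosen to fix the coradical and to rescale $X$ and $Y$ only by scalars, so that the exact skew-primitive generators named in the hypothesis are preserved.
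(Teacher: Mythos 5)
Your proposal is correct and follows essentially the same route as the paper, which states Theorem 2.5 simply as the combination of Lemma 2.1, Proposition 2.2, Lemma 2.3 and Proposition 2.4 (construction via the quantum shuffle product on $\k Q(G,g+h)$, exhaustiveness via the Gabriel-type quiver realization, normalization of the $3$-cocycle up to twist, and the parametrization by $\widetilde{A}$, $\widetilde{B}$). Your explicit attention to the completeness of the relations (the dimension count $mn\,N_1N_2$ via the basis of the quasi-quantum plane) is a point the paper leaves implicit, and your sketch of it is consistent with what the quiver picture provides.
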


\begin{remark}
We give a brief description for the case in which the group is not the direct product of two cyclic groups. Suppose $G=\langle g,h \rangle \cong \langle g\rangle \times \langle h\rangle / \langle g^s h^{-t} \rangle$ for some $0<s<m, \ 0<t<n,$ where $m=|g|, \ n=|h|.$ Let $M(\lambda_1,\lambda_2,\eta_1,\eta_2)$ be a Majid algebra defined over $\Z_m\times\Z_n=\langle g'\rangle\times \langle h'\rangle$ with condition $\zeta_m^{a(s-1)}\lambda_1^s=\lambda_2^t, \ \zeta_{(m,n)}^{b(s-1)}\eta_1^s=\zeta_n^{c(t-1)}\eta_2^t.$ Then one may consider the Majid ideal of $M(\lambda_1,\lambda_2,\eta_1,\eta_2)$ generated by $g'^s-h'^t$ and a graded pointed Majid algebra $M$ generated by $G$ and two skew-primitive elements satisfying (R1-R2) can be obtained as the quotient Majid algebra $M(\lambda_1,\lambda_2,\eta_1,\eta_2)/I.$
\end{remark}

\section{Quasi-quantum groups of dimension $p^3$ and $p^4$}
In this section, we will give a classification of graded pointed Majid algebras with abelian coradical of dimension $p^3$ and $p^4$ for any prime number $p$ with a help of the classification results obtained in Section 2. Throughout the section, we say that a positive integer $N$ is the nilpotent order of a skew-primitive element $X$ if $X^{\vec{N}}=0$ while $X^{\overrightarrow{N-1}}\neq 0.$

\subsection{Pointed Majid algebras of dimension $p^3$}
With the assumption of dimension $p^3,$ we have the following list of graded pointed Majid algebras over the field $\k:$
\begin{itemize}
\item[1.] The Majid algebras $\k\langle g,X | gX = \zeta_p^r\zeta_{p^2}^aXg,\  g^p=1,\  X^{\vec{p^2}}=0 \rangle$ for all $0 \leq r < p$ with associator
$\Phi(g^i,g^j,g^k)=\zeta_p^{a[\frac{j+k}{p}]i}$ for $1\leq a <p$ and with  comultiplication determined by $\Delta(X)=X \otimes 1+g\otimes X$ and $\Delta(g)= g\otimes g.$

\item[2.] The Majid algebras $\k\langle g,h,X | gX=\zeta_p^uXg,\ hX=\zeta_p^vXh,\ g^p=h^p=1,\ gh=hg, \ X^{\vec{p}}=0 \rangle$ for all $1\leq u<p,\ 0\leq v <p$ with associator
$\Phi(g^ih^j,g^sh^t,g^kh^l)=\zeta_{p}^{b[\frac{k+s}{p}]j}\zeta_p^{c[\frac{t+l}{p}]j}$ for $0\leq b, \ c< p$ where either $b$ or $c$ is not 0, and with comultiplication determined by $\Delta(X)=X\otimes 1+ g\otimes X,\ \Delta(g)=g\otimes g,\ \Delta(h)=h\otimes h.$

\item[3.] The Majid algebras $\k\langle g,X | gX=\zeta_{p^2}^\alpha Xg,\ g^{p^2}=1,\ X^{\vec{p}}=0 \rangle$ for all $1\leq \alpha <p^2$ and $(\alpha, p)=1$ with associator
    $\Phi(g^i,g^j,g^k)=\zeta_{p^2}^{tp[\frac{j+k}{p^2}]i}$ for $1\leq t<p$ and with comultiplication determined by $\Delta(X)=X\otimes 1+ g^p\otimes X,\ \Delta(g)=g\otimes g.$
\end{itemize}

\begin{theorem}
If $M$ is a noncosemisimple graded pointed Majid algebra of dimension $p^3$ and is not twist equivalent to a Hopf algebra, then $M$ must be twist equivalent to one of the Majid algebras listed above.
\end{theorem}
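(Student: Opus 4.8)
The strategy is to reduce the classification of dimension-$p^3$ graded pointed Majid algebras that are neither cosemisimple nor twist equivalent to a Hopf algebra to a finite enumeration, using the general results of Section 2. Let $M$ be such an algebra with coradical $M_0 = \k G$ for a finite abelian group $G$, and let $R = M^{\operatorname{coinv} M_0}$ be the associated quasi-quantum plane. Since $M$ is noncosemisimple we have $R \ne \k$, so $R$ contains at least one nontrivial skew-primitive (equivalently, degree-one) element. The first task is to argue that $R$ is generated by exactly \emph{two} mutually skew-commuting primitives $\{X,Y\}$ as in (R1)-(R2): a dimension count forces $|G| \cdot \dim R = p^3$ with $\dim R > 1$, so $|G| \in \{p, p^2\}$ and $\dim R \in \{p^2, p\}$. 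If $R$ were generated by a single primitive $X$ with $X^{\vec N} = 0$, then $M$ would be a (lifted) Nichols-type algebra on one generator, whose bosonization is twist equivalent to a Hopf algebra (a quantum line over a cyclic group); this is the excluded case. Hence $\dim R = p^2$ forces two generators with $X^{\vec p} = Y^{\vec p} = 0$, and the nilpotency-order constraints $N_1 = N_2 = p$ together with $|G| = p$ pin down the two possibilities $G = \Z_{p^2}$ and $G = \Z_p$.

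Next I would feed these group options into Theorem 2.5. For $G = \Z_{p^2} = \langle g\rangle$ with $g = e^\alpha$, $h = e^\beta$, $(\alpha,\beta)=1$: the requirement $N_1 = N_2 = p$ in (3.41) translates into the orders $|\zeta_m^{a\alpha^2}\mu_1^\alpha| = |\zeta_m^{a\beta^2}\mu_2^\beta| = p$, and solving the congruence (2.31) with these constraints, after reparametrizing $\mu_1, \mu_2$ as in $\widetilde B$, collapses everything to case 3 of the list (a single skew-primitive $X$ attached to $e^p$, with $Y$ absorbed or forced trivial; here one checks that the genuine two-generator situation over $\Z_{p^2}$ either degenerates or is twist-trivial). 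For $G = \Z_p = \langle g\rangle \times \langle h\rangle$ (the case $m=n=p$, $\langle g\rangle$ of order $p$) and for $G = \Z_p \times \Z_p$: applying part 1 of Theorem 2.5 with $m = n = p$, the congruence (2.30), $mx + ny + (m+1)b \equiv 0 \bmod mn$, becomes $p(x+y) + (p+1)b \equiv 0 \bmod p^2$, i.e. $b \equiv 0 \bmod p$, hence $b = 0$; combined with $\lambda_1 \ne 1$, $\eta_2 \ne 1$ and the dimension bound (which forces one of the two ``directions'' to live over a group of order $p$ only, killing one cyclic factor or one of $a,c$), this yields exactly cases 1 and 2. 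Case 1 is the sub-case where the surviving cyclic factor has order $p$ but the associator parameter $a$ is nonzero on a $\Z_{p^2}$ worth of data (giving $X^{\vec{p^2}} = 0$); case 2 is where the group is $\Z_p \times \Z_p$ and $a = 0$ but $(b,c) \ne (0,0)$.

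The remaining point is to match the intrinsic relations (3.36)-(3.38) and (3.42)-(3.44) of Theorem 2.5 with the presentations in the list, and to verify the twist-equivalence claim: by the last sentence of each part of Theorem 2.5, every such $M$ is twist equivalent to one of the $M(\lambda_1,\lambda_2,\eta_1,\eta_2)$ or $M(\mu_1,\mu_2)$, so it suffices to show that within each of the three families the free parameters can be normalized (via 3-cocycle twists on $\k G$, i.e. moving along the coboundary classes) down to the discrete parameters $r, a$ (case 1), $u, v, b, c$ (case 2), $\alpha, t$ (case 3) recorded in the list, and that distinct entries are genuinely inequivalent or else the overlaps are harmless for a ``list up to twist equivalence.'' Finally one discards from the union the entries that \emph{are} twist equivalent to ordinary Hopf algebras (this happens precisely when the associator cocycle is a coboundary, i.e. the trivial-associator sub-locus), which is already reflected in the side conditions ``$1 \le a < p$'', ``either $b$ or $c$ is not $0$'', ``$1 \le t < p$''.

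\textbf{Main obstacle.} The delicate step is the dimension bookkeeping that rules out the ``full'' $\Z_{p^2}$ two-generator configuration and that explains why over $\Z_p \times \Z_p$ only a rank-one quasi-quantum plane (not $\dim R = p^2$ with a genuine $\Z_p\times\Z_p$ worth of associator) survives in dimension $p^3$: one must carefully track how $N_1, N_2$, $|G|$, and the nonvanishing conditions $\lambda_1 \ne 1$, $\eta_2 \ne 1$ interact with the congruences (2.30)-(2.31), and confirm that the remaining solution sets $\widetilde A$, $\widetilde B$ cut out, modulo twisting, exactly the three presentations above and no more. Separating genuinely ``new'' (non-Hopf, non-cosemisimple) algebras from those that become Hopf after a twist requires identifying when the 3-cocycle $\Phi_{a,b,c}$ (resp. $\Phi_a$) is cohomologically nontrivial, which is where the explicit cocycle formulas (2.10) and the coboundary conditions are used.
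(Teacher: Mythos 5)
Your proposal rests on a misreading of the hypothesis ``not twist equivalent to a Hopf algebra.'' You take it to exclude the one-generator case and hence to force the two-generator setting (R1)--(R2), after which you try to run Theorem 2.5. But every algebra in the paper's $p^3$ list is generated by the group together with a \emph{single} skew-primitive element $X$; what makes them non-Hopf is not the number of skew-primitive generators but the fact that the associator $\Phi_a$ (resp.\ $\Phi_{b,c}$, $\Phi_{tp}$) is a cohomologically nontrivial $3$-cocycle on $G$, and twisting cannot change its class. In particular your claim that a bosonized quasi-quantum line over a cyclic group is ``twist equivalent to a Hopf algebra'' is false: such algebras are precisely (duals of) Angiono's basic quasi-Hopf algebras, and cases 1 and 3 of the list are of this form. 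Consequently your dimension bookkeeping ($\dim R=p^2$, two generators with $N_1=N_2=p$ over a group of order $p$) goes in the wrong direction: it would erase the entire list and replace it by a two-generator family that in fact cannot occur in dimension $p^3$ (if $|G|=p$ and the associator is nontrivial, then $\lambda^p=\zeta_p^a\neq 1$ forces every skew-primitive to have nilpotent order $p^2$ by Lemmas 2.1 and 2.3, so $G$ together with one such $X$ already exhausts dimension $p^3$; if $|G|=p^2$, then $\dim R=p$ leaves room for only one generator of nilpotent order $p$). The sentence ``$N_1=N_2=p$ together with $|G|=p$ pins down $G=\Z_{p^2}$ and $G=\Z_p$'' is also internally inconsistent.

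The paper's actual proof has a different and more direct shape, which your plan is missing: one first shows $|G|\neq 1$ (otherwise genuine primitives would generate an infinite-dimensional enveloping algebra) and invokes Schauenburg's freeness theorem to get $|G|\in\{p,p^2\}$, noncosemisimplicity excluding $|G|=p^3$; then one argues case by case over $G=\Z_p$, $G=\Z_p\times\Z_p$, and $G=\Z_{p^2}$ (with the sub-cases $|g|=p^2$ or $|g|=p$ for the group-like $g$ supporting $^gM^1$), uses Lemma 2.1 to get $\lambda^{m}=\zeta^a$, and Proposition 2.2 with Lemma 2.3 to identify the nilpotent order of $X$ as $|\zeta^a\lambda|$, matching dimensions each time. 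This yields nilpotent order $p^2$ over $\Z_p$ (case 1), order $p$ with $a=0$ and $(b,c)\neq(0,0)$ over $\Z_p\times\Z_p$ (case 2), and over $\Z_{p^2}$ it eliminates the sub-case $|g|=p^2$ and produces case 3 with $a=tp$. Proposition 2.4, Theorem 2.5 and the congruences (2.30)--(2.31), which are the backbone of your plan, are not needed for $p^3$ at all (they only enter in the $p^4$ classification, e.g.\ to rule out a two-generator configuration over $\Z_p\times\Z_p$). As written, your argument would not establish the theorem.
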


\begin{proof}
First by assumption $M$ is not of the form $(\k G, \Phi).$ By $G$ we denote the set of group-like elements of $M$. If $|G|= 1,$ then in $M$ there are primitive elements (i.e., elements $X$ satisfying $\Delta(X)=X \otimes 1 + 1 \otimes X$) and $M$ contains the universal enveloping algebra $U(g)$ where $g$ is set of the primitive elements of $M,$ see \cite{qha1}. This contradicts with the finite dimensionality of $M.$ So $|G|\neq 1,$ and we have $|G|=p$ or $|G|=p^2$ by \cite[Theorem 3.2]{s}. Let $M^1$ denote the $\k$-space of all nontrivial $(1,g)$-primitive elements, that is, $M^1=\{ X \in M(1) | \Delta(X)=X\otimes 1+g\otimes X, \ \forall g \in G \}.$ Note that $M^1=\bigoplus_{g \in G}\ ^gM^1$ where $^gM^1=\{X \in M^1 | \Delta(X)=X\otimes 1+g\otimes X\}.$ In the following we split our discussion into several cases with respect to the order of $G.$

Consider first the case of $|G|=p.$ Choose any $g\in G$ such that $^gM^1 \neq 0.$ Clearly, $g \ne 1$ and $G=\langle g\rangle$ since the order of $G$ is a prime. Hence the associator of $M,$ up to twist equivalence, is determined by a 3-cocycle $\Phi_a(g^i,g^j,g^k)=\zeta_p^{a[\frac{j+k}{p}]i}$ for $1\leq a\leq p-1.$ By Subsection 2.2, we know that $^gM^1$ is a $(\k G,\widetilde{(\Phi_a)}_g)$-projective representation and it can be decomposed as direct sum of $1$-dimensional subrepresentations. Let $X\in \ ^gM^1$ such that $g\triangleright X=\lambda X.$ By Lemma 2.1, we have $\lambda^p=\zeta_p^a.$ Then apply Proposition 2.2 and Lemma 2.3, we see that the nilpotent order of $X$ is $|\zeta_p^a\lambda|=p^2.$ It follows that $M$ is actually generated by $G$ and $X$ and by a direct verification one can show that $M$ appears as one of the Majid algebras in 1 of the list.

Then we consider the case of $|G|=p^2$. It is well known that a group of order $p^2$ is either $\Z_p\times\Z_p$ or $\Z_{p^2}.$ Assume first $G=\Z_p\times \Z_p.$ As before choose $g\in G$ such that $^gM^1\neq 0$ and take another element $h\in G$ such that $G=\langle g\rangle \times \langle h\rangle.$ Then the associator of $M,$ up to twist equivalence, is determined by a 3-cocycle $\Phi(g^ih^j,g^sh^t,g^kh^l)=\zeta_p^{a[\frac{k+s}{p}]i}\zeta_{p}^{b[\frac{k+s}{p}]j}\zeta_p^{c[\frac{t+l}{p}]j}$ in which $a,b,c$ are not all 0. Take $X \in \ ^gM^1$ such that $g\triangleright X=\lambda_1 X, \ h\triangleright X=\lambda_2 X.$ Then $\lambda_1^p=\zeta_p^a$ by Lemma 2.1 and the nilpotent order of $X$ is $|\zeta_p^a \lambda_1|$ which is $p$ or $p^2.$ Taking the assumed dimension into account, then it is clear that $M$ is generated by $G$ and $X$ and the nilpotent order of $X$ has to be $p.$ So $|\zeta_p^a\lambda_1|=p$, and this implies $a=0$ and $\lambda_1=\zeta_p^i$ for some $0< i < p.$ In this situation $M$ appears as one of the Majid algebras in 2 of the list. Finally assume $G=\Z_{p^2}.$ Again choose a $g$ such that $^gM^1 \ne 0.$ If $|g|=p^2,$ then $G=\langle g \rangle$ and we can set the associator, up to twist equivalence, to be $\Phi_a(g^i,g^j,g^k)=\zeta_{p^2}^{a[\frac{j+k}{p^2}]i}$ for some $1 \leq a< p^2$. Let $X\in\ ^gM^1$ be nonzero such that $g\triangleright X=\lambda X.$ Then $\lambda^{p^2}=\zeta_{p^2}^a$ by Lemma 2.1 and the nilpotent order of $X$ is $|\zeta_{p^2}^a\lambda|>p^2.$ In this case, the Majid subalgebra of $M$ generated by $G$ and $X$ already has dimension $> p^3,$ which is absurd. This forces $|g|=p.$ Then we may assume $G=\langle h \rangle$ and $g=h^p$ and the associator, up to twist equivalence, is given by $\Phi_a(h^i,h^j,h^k)=\zeta_{p^2}^{a[\frac{j+k}{p^2}]i}$ for some $1 \leq a< p^2.$ In a similar manner, take a nonzero element $X\in \ ^gM^1$ such that $h\triangleright X=\lambda X$ with $\lambda^{p^2}=\zeta_{p^2}^{ap}.$ Then by the assumption of the dimension one sees that the nilpotent order of $X$ is $|\lambda^p|=p.$ It follows that $a=tp$ for $1\leq t<p$ and $\lambda=\zeta_{p^2}^\alpha$ for some $1\leq \alpha <p^2$ and $(\alpha,p)=1.$  In this situation $M$ appears as one of the Majid algebras in 3 of the list.

This completes the proof of the theorem.
\end{proof}

\begin{remark}
Pointed Hopf algebras of dimension $p^3$ over $\k$ were classified in \cite{as}. Combining those results with ours one may achieve a complete classification of graded pointed Majid algebras of dimension $p^3$ up to twist equivalence.
\end{remark}

\subsection{Pointed Majid algebras of dimension $p^4$}
First by direct construction we have the following list of graded pointed Majid algebras of dimension $p^4$ with abelian coradical.

\begin{itemize}
\item[1.] The Majid algebras $\k \langle g,h,X,Y |\ gX= \zeta_p^{\alpha} Xg,\ gY=\zeta_p^{-\alpha\beta }Yg,\ hX=\zeta_p^\gamma Xh,\ hY=\zeta_p^\eta Yh,\ XY=\zeta_p^{-\alpha\beta}YX,\ gh=hg,\ g^p=h^p=1,\ X^{\vec{p}}=Y^{\vec{p}}=0 \rangle$ for all $1\leq \alpha,\beta\leq p-1,\ 0\leq \gamma,\eta<p$ with associator
$\Phi(g^ih^j,g^sh^t,g^kh^l)=\zeta_{p}^{b[\frac{k+s}{p}]j}\zeta_p^{c[\frac{t+l}{p}]j}$ for $0\leq b,c< p$ where either $b$ or $c$ is nonzero, and with comultiplication determined by $\Delta(X)=X\otimes 1+g\otimes X,\ \Delta(Y)=Y\otimes 1+g^\beta \otimes Y, \ \Delta(g)=g\otimes g,\  \Delta(h)=h\otimes h.$

\item[2.] The Majid algebras $\k \langle g,h,X|gX=\zeta_p^\alpha \zeta_{p^2}^aXg,\ hX=\zeta_p^\beta Xh,\ g^p=h^p=1,\ gh=hg,\ X^{\vec{p^2}}=0 \rangle$ for all $0\leq \alpha, \ \beta< p$ with associator
$\Phi(g^ih^j,g^sh^t,g^kh^l)=\zeta_p^{a[\frac{k+s}{p}]i}\zeta_{p}^{b[\frac{k+s}{p}]j}\zeta_p^{c[\frac{t+l}{p}]j}$ for $0<a<p$ and $0\leq b,c<p,$
and with comultiplication determined by $\Delta(X)=X\otimes 1+ g\otimes X,\ \Delta(g)=g\otimes g,\ \Delta(h)=h\otimes h.$

\item[3.] The Majid algebras $\k \langle g,X|gX=\zeta_{p^2}^\alpha \zeta_{p^3}^aXg,\ g^{p^2}=1,\ X^{\vec{p^2}}=0\rangle$ for all $0 \leq \alpha < p^2$ with associator $\Phi(g^i,g^j,g^k)=\zeta_{p^2}^{a[\frac{j+k}{p^2}]i}$ for $0< a <p^2$ and $(a,p)=1,$
and with comultiplication determined by $\Delta(X)=X\otimes 1+ g^p\otimes X,\ \Delta(g)=g\otimes g.$

\item[4.] The Majid algebras $\k \langle g,X,Y |gX=\zeta_{p^2}^\alpha Xg,\ gY=\zeta_p^{\beta}\zeta_{p^2}^{-\alpha}Yg,\ XY=\zeta_p^{-\alpha}YX,\ g^{p^2}=1,\ X^{\vec{p}}=Y^{\vec{p}}=0\rangle$ for all $1\leq \alpha <p^2,$ $(\alpha,p)=1$ and $0\leq \beta< p$ with associator $\Phi(g^i,g^j,g^k)=\zeta_{p^2}^{tp[\frac{j+k}{p^2}]i}$ for $1\leq t<p$
and with comultiplication determined by $\Delta(X)=X\otimes 1+ g^p\otimes X,\ \Delta(Y)=Y\otimes 1+ g^p\otimes 1,\  \Delta(g)=g\otimes g.$

\item[5.] The Majid algebras $\k \langle g,h,X|gX=\zeta_p^\alpha Xg,\ hX=\zeta_p^\beta Xh,\ gh=hg,\ g^{p^2}=h^p=1,\ X^{\vec{p}}=0\rangle$ for all $1\leq \alpha< p$ and $0\leq \beta<p$ with associator $\Phi(g^ih^j,g^sh^t,g^kh^l)=\zeta_{p}^{b[\frac{k+s}{p^2}]j}\zeta_p^{c[\frac{t+l}{p}]j}$ for $0\leq b,c< p$ in which either $b$ or $c$ is nonzero, and with comultiplication determined by $\Delta(X)=X\otimes 1+ g\otimes X,\ \Delta(g)=g\otimes g,\ \Delta(h)=h\otimes h.$

\item[6.] The Majid algebras $\k \langle g,h,X|gX=\zeta_{p^2}^\alpha Xg,\ hX=\zeta_p^\beta Xh,\ gh=hg,\ g^{p^2}=h^p=1,\ X^{\vec{p}}=0\rangle$ for all $1\leq \alpha < p^2,$  $(\alpha,p)=1$ and $0\leq \beta <p$ with associator $\Phi(g^ih^j,g^sh^t,g^kh^l)=\zeta_{p}^{b[\frac{k+s}{p^2}]j}\zeta_p^{c[\frac{t+l}{p}]j}$ for $0\leq b,c< p$ in which either $b$ or $c$ is nonzero, and with comultiplication determined by $\Delta(X)=X\otimes 1+ g^p\otimes X,\ \Delta(g)=g\otimes g,\ \Delta(h)=h\otimes h.$
\item[7.] The Majid algebras $\k \langle g,h,X|gX=\zeta_{p^2}^\alpha Xg,\ hX=\zeta_p^\beta Xh,\ gh=hg,\ g^{p^2}=h^p=1,\ X^{\vec{p}}=0 \rangle$ for all $1\leq \alpha < p^2,$  $(\alpha ,p)=1$ and  $0\leq \beta <p$ with associator $\Phi(g^ih^j,g^sh^t,g^kh^l)=\zeta_{p^2}^{tp[\frac{k+s}{p^2}]i}\zeta_{p}^{b[\frac{k+s}{p^2}]j}\zeta_p^{c[\frac{t+l}{p}]j}$ for $1\leq t<p$ and  $0\leq b,c< p$,
    and with comultiplication determined by $\Delta(X)=X\otimes 1+ g^p\otimes X,\ \Delta(g)=g\otimes g,\ \Delta(h)=h\otimes h.$

\item[8.] The Majid algebras $\k \langle g,h,X|gX=\zeta_{p^2}^\alpha\zeta_{p^3}^b Xg,\ hX=\zeta_{p}Xh,\ gh=hg,\ g^{p^2}=h^{p}=1,\ X^{\vec{p}}=0 \rangle$ for all $0\leq \alpha< p^2$ with associator $\Phi(g^ih^j,g^sh^t,g^kh^l)=\zeta_{p^2}^{a[\frac{k+s}{p^2}]i}\zeta_{p}^{b[\frac{k+s}{p^2}]j}$ for $0\leq a\leq p^2,\ 0\leq b< p$ in which either $a$ or $b$ is nonzero, and with comultiplication determined by $\Delta(X)=X\otimes 1+ h\otimes X,\ \Delta(g)=g\otimes g,\ \Delta(h)=h\otimes h.$

\item[9.] The Majid algebras $\k \langle e,f,g,X| eX=\zeta_p^\alpha Xe,\ fX=\zeta_p^{\beta}Xf,\ gX=\zeta_p^\gamma Xg,\ ef=fe,\ eg=ge,\ fg=gf,\ X^{\vec{p}}=0\rangle$ for all $1\leq \alpha <p$ and $0\leq \beta,\gamma <p$ with associator \[ \quad \quad \quad \Phi(e^{i_1}f^{i_2}g^{i_3},e^{j_1}f^{j_2}g^{j_3},e^{k_1}f^{k_2}g^{k_3}) =\zeta_p^{\sum_{l=1}^3 a_li_l[\frac{j_l+k_l}{p}]}\zeta_p^{a_4i_2[\frac{j_1+k_1}{p}]+a_5i_3[\frac{j_1+k_1}{p}]+a_6i_3[\frac{j_2+k_2}{p}]}\zeta_p^{a_7k_1j_2i_3}\]
in which $a_1=0, \ 0\leq a_i<p, \ \forall 2\leq i\leq 7$ and at least one $a_i$ is nonzero, and with comultiplication determined by $\Delta(e)=e\otimes e,\ \Delta(f)=f\otimes f,\ \Delta(g)=g\otimes g,\ \Delta(X)=X\otimes 1+ e\otimes X.$
\end{itemize}

\begin{theorem}
If $M$ is a noncosemisimple graded pointed Majid algebra of dimension $p^4$ with abelian coradical and is not twist equivalent a Hopf algebra, then $M$ is twist equivalent to one of the Majid algebras listed above.
\end{theorem}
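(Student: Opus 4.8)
The plan is to run the argument of the proof of the preceding theorem (the case $\dim M=p^3$) with $\dim M=p^4$, after a preliminary reduction confining the diagram of $M$ to the quasi-quantum planes of Section~2. Let $G=G(M)$. Since $M$ is coradically graded, $M(0)=\k G$ and the coradical $M_0=(\k G,\Phi)$ is a Majid subalgebra, with $[\Phi]\neq 0$ in $H^3(G,\k^*)$ — otherwise $M$ would be twist equivalent to a Hopf algebra. By the quasi-version of Majid's bosonization \cite{ap}, $M\cong R\# M_0$ with $R=M^{\operatorname{coinv} M_0}$, so $|G|\cdot\dim R=p^4$; and $|G|\neq 1$, for otherwise $M$ would contain the universal enveloping algebra of its space of primitives, against $\dim M<\infty$. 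Hence $|G|\in\{p,p^2,p^3\}$, the value $p^4$ being excluded by noncosemisimplicity.

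Next I would pin down $R$. Being coradically graded, $R$ is a finite-dimensional coradically graded braided Hopf algebra in ${}_{M_0}^{M_0}\mathcal{Y}\mathcal{D}^{\Phi}$; with $V:=R(1)=\bigoplus_{g\in G}{}^{g}V$ as in Subsection~2.2, each ${}^{g}V$ is a $(G,\widetilde\Phi_{g})$-projective representation, which by \cite{qha1} may be taken to be a sum of one-dimensional ones. Combining Lemmas~2.1 and~2.3 with the bound $\dim R\le p^2$ and with the freeness of a braided Hopf algebra over a braided Hopf subalgebra, one first excludes $|G|=p$: then $G=\Z_p$ must carry a nontrivial associator $\Phi_a$, every nonzero skew-primitive over it then has nilpotent order $p^2$, so $\dim R$ could only be $1$, $p^2$ or $\ge p^4$, never $p^3$. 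For $|G|\in\{p^2,p^3\}$ the same input forces $\dim V\le 2$ and shows that $R$ is one of $\k[X]/(X^{\vec{p}})$, $\k[X]/(X^{\vec{p^2}})$, or the quasi-quantum plane $\k\langle X,Y\rangle/(X^{\vec{p}},Y^{\vec{p}},XY-qYX)$, the skew-commutativity and the nilpotency exponents being forced by $\dim R\in\{p,p^2\}$. Thus $M$ satisfies (R1)--(R2) — with $Y$ absent in the one-variable cases, and with the targets $g,h$ of the skew-primitives generating a possibly proper subgroup of $G$, possibly with $g=h$ — so that Lemma~2.1, Propositions~2.2 and~2.4 and Lemma~2.3, together with their evident analogues over a rank-three coradical and over a ramification datum $g+h$ whose summands need not generate $G$, become the tools for the enumeration; one also records the normalized $3$-cocycles on $G$ from \cite{bgrc1} (for the rank-three elementary abelian group, the rank-three version, already written out in item~$9$ of the list) and the induced $2$-cocycles $\widetilde\Phi_g$ via $(2.6)$--$(2.7)$.

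The enumeration then proceeds by cases. For $|G|=p^2$ one has $G\cong\Z_p\times\Z_p$ or $\Z_{p^2}$ and $\dim R=p^2$; going through the cohomology classes of $\Phi$, the choice of target(s) and the eigenvalues $\lambda_i$ (resp.\ $\mu_i$), the requirement that the nilpotent order be $p^2$ in one variable and $(p,p)$ in two variables — equivalently the solvability of $(2.30)$/$(2.31)$ together with the nondegeneracy $\lambda_1\neq1,\eta_2\neq1$ (resp.\ $\mu_1^{m+\alpha}\neq1,\mu_2^{m+\beta}\neq1$) — isolates exactly families~$1$--$4$; in each surviving case the targets generate a proper subgroup, a configuration in which they generate all of $G$ being obstructed by $(2.30)$ whenever $\Phi$ is nontrivial and otherwise giving an ordinary Hopf algebra. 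For $|G|=p^3$ one has $G\cong\Z_{p^3}$, $\Z_{p^2}\times\Z_p$ or the rank-three elementary abelian group, and $\dim R=p$, so $R=\k[X]/(X^{\vec{p}})$ on a single $(1,g)$-skew-primitive $X$; here one lists the $3$-cocycles on $G$, forms $\widetilde\Phi_g$, retains only those $\Phi$ for which $\widetilde\Phi_g$ is a coboundary (so that the one-dimensional representation $\k X$ exists), imposes that the nilpotent order of $X$ equal $p$, and reads off the scalars $gX=(\cdot)Xg$, $hX=(\cdot)Xh$ and the associator; this produces exactly families~$5$--$9$.

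The main obstacle is this last case $G=\Z_p\times\Z_p\times\Z_p$. Unlike any group appearing in the dimension-$p^3$ classification, it has $H^2(\Z_p\times\Z_p\times\Z_p,\k^*)\neq 0$, so requiring $\k X$ to be one-dimensional is a genuine restriction on $\Phi$; and $H^3$ of this group contains the triple-cup-product classes — visible as the factor $\zeta_p^{a_7 k_1 j_2 i_3}$ in family~$9$ — which have no counterpart in dimension $p^3$. Computing $\widetilde\Phi_g$ for these classes, deciding precisely when it trivializes, and extracting the induced $G$-action on $X$ is where the real work lies; the milder analogue for $\Z_{p^2}\times\Z_p$ (where $H^2\cong\Z_p$) and the remaining bookkeeping are routine once the pattern of the previous proof is at hand.
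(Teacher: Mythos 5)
Your overall strategy is the paper's: bound $|G|\in\{p,p^2,p^3\}$, realize the nontrivial skew-primitives as arrows of a Hopf quiver, and use Lemma 2.1, Propositions 2.2 and 2.4 and Lemma 2.3 to control nilpotent orders, then enumerate over the abelian groups of each order. But as written your text is a plan rather than a proof. The two assertions that carry the whole theorem --- ``isolates exactly families 1--4'' and ``this produces exactly families 5--9'' --- are precisely what has to be established, and you give no computations for them. Worse, for the rank-three elementary abelian group you explicitly set the decisive computation aside (``is where the real work lies''), whereas this is exactly the case the paper settles in its part (5): it quotes the explicit $3$-cocycle representatives from \cite{bgrc2}, takes a nonzero $X\in{}^{e}M^1$ with $e\triangleright X=\lambda X$, $\lambda^p=\zeta_p^{a_1}$, and lets the dimension constraint force $a_1=0$ and $\lambda=\zeta_p^{\alpha}$, which yields family 9. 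Likewise the nonexistence claims you lean on in the $|G|=p^2$ analysis (e.g.\ that two skew-commuting skew-primitives based at independent generators of $\Z_p\times\Z_p$ cannot occur) rest on the concrete check that equation (2.30) has no solution when $a=c=0$, $0<b<p$; the paper performs this check, you only allude to it.

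The sharpest failure point is your $G\cong\Z_{p^3}$ subcase. You correctly list it among the abelian groups of order $p^3$ (note that the paper's proof only discusses $\Z_{p^2}\times\Z_p$ and $\Z_p\times\Z_p\times\Z_p$), but you then conclude, with no argument at all, that it contributes nothing. By the very lemmas you invoke this cannot be waved through: take $G=\langle e\rangle=\Z_{p^3}$ with associator $\Phi_{pb}$ ($b\not\equiv 0 \bmod p^2$) and a skew-primitive $X$ based at $g=e^{p^2}$; Lemma 2.1(2) permits $e\triangleright X=\zeta_{p^3}^{y}X$ for any $y$ (since $\mu^{p^3}=\zeta_{p^3}^{pb\cdot p^2}=1$), and Lemma 2.3(2) then gives quantum parameter $\zeta_{p^3}^{pb\cdot p^4}\mu^{p^2}=\zeta_p^{y}$, hence nilpotent order $p$ for $p\nmid y$ and a $p^4$-dimensional graded pointed Majid algebra over $\Z_{p^3}$ with nontrivial associator class. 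Since twisting preserves the group of group-like elements, such an algebra is not among families 1--9. So the step ``produces exactly families 5--9'' is not merely unproved: it needs either an argument ruling this configuration out or an enlargement of the list, and neither your proposal nor, for that matter, the paper's own proof addresses the cyclic group of order $p^3$.
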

\begin{proof}
Let $G$ be the group of group-like elements of $M.$ By a similar argument as used in the beginning of the proof of Theorem 3.1, we have $|G|=p, \ p^2 \ or \ p^3.$ Again by $M^1$ we denote the space spanned by $(1,g)$-primitive elements of $M$ and write $M^1=\bigoplus_{g\in G}\ ^gM^1.$ We split the discussion into 5 cases with respect to the structure of $G.$

(1) $G=\Z_p.$ Choose any $g$ such that $^gM^1 \neq 0.$ Then clearly $G=\langle g\rangle.$ Hence the associator, up to twist equivalence, is given by $\Phi_a(g^i,g^j,g^k)=\zeta_p^{a[\frac{j+k}{p}]i}$ for some $1\leq a<p.$ Take a nonzero $X\in\ ^gM^1$ such that $g\triangleright X=\lambda X$ with $\lambda^p=\zeta_p^a.$ Then the nilpotent order of $X$ is $|\zeta_p^a\lambda|=p^2.$ Considering the dimension of $M,$ one observes that there must be another skew-primitive element $Y$ which is linearly independent to $X.$ By a similar discussion, we know the nilpotent order of $Y$ is also $p^2.$ In this situation, the Majid subalgebra of $M$ generated by $g, X, Y$ already has dimension $>p^4.$ Hence there are no such Majid algebras with coradical of dimension $p.$

(2) $G=\Z_p\times \Z_p.$ Choose any $g$ such that $^gM^1 \neq 0$ and a nonzero $X \in\ ^gM^1.$ Then clearly $|g|=p$ and the nilpotent order of $X$ is either $p$ or $p^2.$ First assume the latter. Then we can find another element $h \in G$ such that $G=\langle g\rangle \times \langle h\rangle$ and the associator, up to twist equivalence, is given by $\Phi(g^ih^j,g^sh^t,g^kh^l)=\zeta_p^{a[\frac{k+s}{p}]i}\zeta_{p}^{b[\frac{k+s}{p}]j}\zeta_p^{c[\frac{t+l}{p}]j}$ for some $0\leq a,b,c<p$ in which $a \ne 0.$ In this situation, $M$ appears as one of the Majid algebras in 2 of the list. Next assume the nilpotent order of $X$ is $p.$ Then in $M^1$ there must be another skew-primitive element $Y$ which is linearly independent to $X.$ Assume $\Delta(Y)=Y \otimes 1 + f \otimes Y.$ If $f=g^\alpha$ for some $1 \leq \alpha<p,$ then we can choose $h \in G$ such that $G=\langle g\rangle \times \langle h\rangle$ and assume the associator as before.  Assume $g\triangleright Y=\beta Y$, then we have $\beta^p=1$ and $fY=\beta^\alpha Yf.$ Then the nilpotent order of $Y$ is $|\beta^\alpha|=p.$ In this situation, the Majid algebra $M$ is generated by $G, X, Y$ and appears as one of those in 1 of the list. If $f \notin \langle g \rangle,$ then $G=\langle g\rangle \times \langle f \rangle$ and the associator of $M$ can be given as above with $h$ replaced by $f.$ By taking the dimension of $M$ into account, the nilpotent order of $Y$ is $p$ and $X$ and $Y$ are skew-commutative. This forces $a=c=0$ and $b \ne 0$ in the associator. Note that such $M$ satisfies the condition of those in 1 of Theorem 2.5 with $m=n=p.$ However, it is quite obvious that the equation (2.29) has no solutions if $0<b<p,$ therefore in this situation there are no Majid algebras satifying the required condition.

(3) $G=\Z_{p^2}.$ Choose any $g$ such that  $^gM^1\neq 0$, then $|g|=p$ or $p^2.$  Firstly assume $|g|=p^2,$ then  $G=\langle g \rangle$ and the associator, up to twist equivalence, is given by $\Phi(g^i,g^j,g^k)=\zeta_{p^2}^{a[\frac{j+k}{p^2}]i}$ for some $0<a<p^2$. Take a nonzero element $X\in \ ^gM^1$  such that $g\triangleright X=\lambda X$ with $\lambda^{p^2}=\zeta_{p^2}^{a}.$  Then the nilpotent of $X$ is $|\zeta_{p^2}^a\lambda|>p^2,$ so the Majid subalgebra of $M$ generated by $g, X$ already has dimension $>p^4.$ This is not possible. Hence in this situation there are no Majid algebras with the required condition. Next assume $|g|=p.$ Then $G=\langle h\rangle $ for some $h$ such that $g=h^p.$ The associator may be given by $\Phi(h^i,h^j,h^k)=\zeta_{p^2}^{a[\frac{j+k}{p^2}]i}$ for some $0<a<p^2$ up to twist equivalence. Choose a nonzero $X\in\ ^gM^1$ such that $h\triangleright X=\lambda X$ with $\lambda^{p^2}=\zeta_{p^2}^{ap}.$ Then $g\triangleright X=h^p\triangleright X=\zeta_{p^2}^{ap(p-1)}\lambda^pX,$ so $gX=\lambda^pXg$ and the nilpotent order of $X$ is $|\lambda^p|.$ If $(a, p)=1,$ then $|\lambda^p|=p^2.$ In this situation $M$ is generated by $h, X$ and appears as one of the Majid algebras in 3 of the list. If $a=tp$ for some $1\leq t<p,$ then $|\lambda^p|=p.$ So there must be another skew-primitive element $Y$ of nilpotent order $p$ which is linearly independent to $X,$ and $X,\ Y$ are skew-commutative. In this case $M$ appears as one of those in 4 of the list.

(4) $G=\Z_{p^2}\times \Z_p.$ Choose $^gM^1\neq 0$ and a nonzero $X\in\ ^gM^1.$ The assumption of $\dim M=p^4$ forces that the nilpotent order of $X$ is $p.$ If $|g|=p^2,$ then we can find another $h\in G$ such that $G=\langle g\rangle \times \langle h\rangle$ and the associator, up to twist equivalence, is given by $\Phi(g^ih^j,g^sh^t,g^kh^l)=\zeta_{p^2}^{a[\frac{k+s}{p^2}]i}\zeta_{p}^{b[\frac{k+s}{p^2}]j}\zeta_p^{c[\frac{t+l}{p}]j}$ for some $0\leq a<p^2, 0<b,c<p$ and at least one of them is nonzero. Let $g \triangleright X=\lambda X$ with $\lambda^{p^2}=\zeta_{p^2}^a.$ The nilpotent order of $X$ is $|\zeta_{p^2}^a\lambda|=p,$ it follows that $a=0$ and $\lambda=\zeta_p^\alpha,$ for some $1\leq \alpha <p.$ In this situation, $M$ appears as one of the Majid algebras in 5. If $|g|=p,$ then there are two cases to be considered. Firstly if there is an $f \in G$ of order $p^2$ such that $g=f^p,$ then we can find another element $h\in G$ such that $G=\langle f\rangle \times \langle h\rangle$ and the associator, up to twist equivalence, can be given as above with $g$ replaced by $f.$ Let $f \triangleright X=\lambda X$ with $\lambda^{p^2}=\zeta_{p^2}^{ap}.$ Then $gX=\lambda^pXg$ and $|\lambda^p|=p$ as the nilpotent order of $X$ is $p.$ It follows that $a=0$ or $tp$ for $1\leq t<p$ and $\lambda=\zeta_{p^2}^\alpha$ for some $1\leq\alpha <p^2$ and $(\alpha,p)=1.$ If $a=0,$  $M$ appears as one of the Majid algebras in 6 listed above; if $a=tp$ for $1\leq t<p,$ then $M$ appears as one of the Majid algebras in 7. Secondly if there is not an $f$ of order $p^2$ such that $g = f^p,$ then $G=\langle h \rangle \times \langle g\rangle$ for some $h$ of order $p^2$ and the associator, up to twist equivalence, is given as above in a similar manner. Let $g\triangleright X=\lambda X$ with $\lambda^{p}=\zeta_p^c.$ Again by the condition that the nilpotent order of $X$ is $p,$ it follows that $c=0$ and $\lambda=\zeta_p^\alpha$ for some $1\leq \alpha <p.$ In this situation $M$ appears as one of the Majid algebras in 8.

(5) $G=\Z_p\times \Z_p \times \Z_p.$ Assume that $^eM^1\neq 0,$ then $|e|=p$ and we can find elements $f,g\in G$ such that $G=\langle e\rangle\times \langle f\rangle\times \langle g\rangle.$ Then according to \cite{bgrc2}, the associator of $M,$ up to twist equivalence, is of the form
\begin{equation*}
\Phi_{\overline{a}}(e^{i_1}f^{i_2}g^{i_3},e^{j_1}f^{j_2}g^{j_3},e^{k_1}f^{k_2}g^{k_3})
=\zeta_p^{\sum_{l=1}^3 a_li_l[\frac{j_l+k_l}{p}]}\zeta_p^{a_4i_2[\frac{j_1+k_1}{p}]+a_5i_3[\frac{j_1+k_1}{p}]+a_6i_3[\frac{j_2+k_2}{p}]}\zeta_p^{a_7k_1j_2i_3},
\end{equation*}
where $\overline{a}\in\{(a_1,\cdots ,a_7)|0\leq a_i <p,\ 1\leq i\leq 7\}$ and at least one of the arguments of $\overline{a}$ is nonzero. Take a nonzero $X \in \ ^eM^1$ such that $e\triangleright X=\lambda X$ with $\lambda^p=\zeta_p^{a_1}.$ Then the nilpotent order of $X$ is $|\zeta_p^{a_1}\lambda|=p$ by the assumption $\dim M=p^4.$ This forces $a_1=0$ and $\lambda=\zeta_p^\alpha$ for some $1\leq \alpha<p.$ In this situation $M$ appears as one of the Majid algebras in 9 of the list.
\end{proof}

\noindent{\bf Acknowledgement:} The authors are very grateful to the referee for the valuable comments and suggestions which helped to improve the exposition.

\end{document}